\documentclass[aap,authoryear]{imsart}
\usepackage{preamble-ims}

\begin{document}
\begin{frontmatter}

\title{Divergence-Kernel method for scores of random systems}

\begin{aug}
\author[A]{\fnms{Angxiu}~\snm{Ni} \ead[label=e1]{angxiun@uci.edu}}
\address[A]{Department of Mathematics,
University of California, Irvine \printead[presep={ ,\ }]{e1}}
\end{aug}

\begin{abstract}
We derive the divergence-kernel formula for the scores of random dynamical systems, then formally pass to the continuous-time limit of SDEs.
Our formula works for multiplicative noise systems over any period of time; it does not require hyperbolicity.
We also consider several special cases: (1) for additive noise, we give a pure kernel formula; (2) for short-time, we give a pure divergence formula; (3) we give a formula which does not involve scores of the initial distribution.
Based on the new formula, we derive a pathwise Monte-Carlo algorithm for scores, and demonstrate it on the 40-dimensional Lorenz 96 system with multiplicative noise.
\end{abstract}

\begin{keyword}[class=MSC]
\kwd[Primary ]{60H07} %Stochastic calculus of variations and the Malliavin calculus
\kwd{60J60} % Diffusion processes
\kwd{65D25} % Numerical differentiation
\kwd{65C30} %Stochastic differential and integral equations
%\kwd{37M25} % Computational methods for ergodic theory (approximation of invariant measures, computation of Lyapunov exponents, entropy, etc.)
\kwd[; secondary ]{65C05} % Monte Carlo methods
%\kwd{60G10} % Stationary stochastic processes
\end{keyword}

\begin{keyword}
\kwd{Score function}
\kwd{Cameron-Martin-Girsanov}
\kwd{Divergence method}
\kwd{Diffusion process}
\end{keyword}

\end{frontmatter}

\maketitle

\section{Introduction}
\label{s:intro}

\subsection{Main results}

This paper rigorously derives the divergence-kernel formula for the scores of discrete-time random dynamical systems.
Then we formally pass to the continuous-time limit; here we present the continuous-time result here since it is more explicit and looks simpler.
Our result gives a fundamental tool for diffusion processes.

\begin{restatable}[formal divergence-kernel score fomula for SDE]{theorem}{goldbach}
\label{t:divker_sde}
For the SDE in $\R^M$,
\[
dx_t = F(x_t) dt + \sigma(x_t)dB_t,
\]
For any backward adapted process $\{ \alpha_t \}_{ t=0 }^T$, let $\{ \nu_t \}_{ t=0 }^T$ be the forward covector process
\begin{equation*}\begin{split}
  d \nu
  = \left( 
  (\nabla \sigma \nabla \sigma^T 
  - \nabla F^T - \alpha)\nu 
  - \nabla \div F 
  + \nabla^2 \sigma \nabla \sigma 
  + \nabla \sigma \Delta \sigma \right) dt
  - 
  (\nabla \sigma \nu^T 
  + \nabla^2 \sigma 
  + \alpha \sigma^{-1}) dB
  .
\end{split}\end{equation*}
with initial condition $ \nu_0 = \nabla \log h_0(x_0)$,
where $h_t$ is the probability density of $x_t$.
Then
\begin{equation*}\begin{split}
  \nabla \log h_T (x_T) 
  = \E{\nu_T  \middle| x_T}.
\end{split}\end{equation*}
\end{restatable}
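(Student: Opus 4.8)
The plan is to obtain both the SDE for $\nu$ and the conditional-expectation formula as the small-time-step limit of the rigorous discrete-time divergence-kernel result. First I would discretize the SDE by Euler--Maruyama, $x_{n+1} = x_n + F(x_n)\,\Delta t + \sigma(x_n)\,\Delta B_n$ with $\Delta B_n \sim \mathcal N(0,\Delta t\, I)$, so that the one-step map has Jacobian $\partial x_{n+1}/\partial x_n = I + (\nabla F)\,\Delta t + (\nabla\sigma)\,\Delta B_n$ and log-Jacobian $\log|\det(\partial x_{n+1}/\partial x_n)|$ that can be expanded explicitly. I would then substitute this map, and the discrete analogue of the free parameter $\alpha_t$, into the discrete covector recursion supplied by the discrete theorem, and expand every term to first order in $\Delta t$, using the It\^o rule $\Delta B_n\,\Delta B_n^T \to \Delta t\, I$ and retaining the $O(\Delta B_n)$ fluctuations as the martingale increment $dB$.

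The bookkeeping organizes into the three groups named in the statement. The homogeneous drift $(\nabla\sigma\nabla\sigma^T - \nabla F^T - \alpha)\nu$ should assemble from the transposed Jacobian acting on $\nu$ (giving $-\nabla F^T\nu$), the It\^o second-order correction produced by the fluctuation $(\nabla\sigma)\Delta B_n$ acting twice (giving $\nabla\sigma\nabla\sigma^T\nu$), and the regularization term $-\alpha\nu$; the same regularization contributes the martingale piece $-\alpha\sigma^{-1}\,dB$. The inhomogeneous drift $-\nabla\div F + \nabla^2\sigma\,\nabla\sigma + \nabla\sigma\,\Delta\sigma$ is the continuous limit of the gradient of the log-Jacobian: its first-order part yields $-\nabla\div F\,\Delta t$, while the It\^o averaging of the noise-dependent part of $\log|\det|$ produces the genuinely second-order-in-$\sigma$ terms $\nabla^2\sigma\,\nabla\sigma$ and $\nabla\sigma\,\Delta\sigma$. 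The diffusion coefficient $-(\nabla\sigma\,\nu^T + \nabla^2\sigma)$ is the kernel (noise-sensitivity) term, with $\nabla\sigma\,\nu^T$ coming from coupling the injected noise to the current covector and $\nabla^2\sigma$ from differentiating the noise amplitude. Matching the initial data $\nu_0 = \nabla\log h_0(x_0)$ and letting $\Delta t\to 0$ gives the stated SDE, while the discrete identity $\mathbb E[\nu_n\mid x_n] = \nabla\log h_n(x_n)$ (part of the discrete theorem) passes to $\mathbb E[\nu_T\mid x_T] = \nabla\log h_T(x_T)$.

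The step I expect to be the main obstacle is the tensor algebra of the second-order terms. Since $\sigma$ is matrix-valued, $\nabla\sigma$ is a three-index object and the products $\nabla\sigma\nabla\sigma^T$, $\nabla^2\sigma\,\nabla\sigma$, $\nabla\sigma\,\Delta\sigma$, and $\nabla\sigma\,\nu^T$ only have an unambiguous meaning once index and contraction conventions are fixed; the delicate point is then to verify that each $O(\Delta t)$ contribution arising from a pair of noise increments lands in exactly one of these combinations with the correct coefficient, since a Laplacian-type term is easy to drop or double-count when commuting $\nabla$ past the It\^o expectation.

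As an independent consistency check on the coefficients I would verify the defining weak property of the score: for every test function $\phi$ one has $\mathbb E[\phi(x_t)\,\nabla\log h_t(x_t)] = -\mathbb E[\nabla\phi(x_t)]$, so it suffices that $G(t) := \mathbb E[\phi(x_t)\,\nu_t] + \mathbb E[\nabla\phi(x_t)]$ vanish. It vanishes at $t=0$ by the initial condition, and $G'(t)$ can be computed by applying the It\^o product rule to $\phi(x_t)\nu_t$ and the generator $\mathcal L$ to $\nabla\phi(x_t)$; the martingale parts drop under expectation, and one checks that the drift of $\nu$, the cross-variation of $\nabla\phi\cdot\sigma\,dB$ with the $\nu$-diffusion $-(\nabla\sigma\nu^T+\nabla^2\sigma+\alpha\sigma^{-1})\,dB$, and the generator term cancel. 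Carrying this out requires the identity $\mathbb E[\nu_t\mid x_t]=\nabla\log h_t(x_t)$ at intermediate times to handle the terms linear in $\nu_t$, so it is really the infinitesimal form of the same statement; its virtue is that it exposes why the result is independent of the choice of $\alpha$, the $\alpha$-dependent pieces cancelling among themselves.
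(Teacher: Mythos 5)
Your proposal is correct and follows essentially the same route as the paper: Euler--Maruyama discretization, substitution of the $O(\Delta t)$ expansions of the one-step Jacobian, its log-determinant divergence, and the inverse pullback into the rigorous discrete-time recursion of \Cref{t:divkernstep_covec}, It\^o contraction of the noise increments, and a formal passage to the limit, with your term-by-term attribution of drift and diffusion coefficients matching the paper's bookkeeping in \Cref{t:div,t:ginv}. Two minor points: in the paper $\sigma$ is scalar-valued, so the three-index tensor algebra you flag as the main obstacle does not arise ($\nabla\sigma$ is a vector, $\nabla^2\sigma$ a Hessian, $\Delta\sigma$ a scalar); and the one translation step you leave implicit is the rescaling of the discrete schedule, $\alpha_n \mapsto \alpha_{t_n}\,\Delta t$, which is exactly what makes the kernel term $\alpha_{n+1}\Delta t\,\nabla\log k(\Delta B_n)/\sigma(x_n) = -\alpha_{n+1}\,\Delta B_n/\sigma(x_n)$ converge to the finite martingale increment $-\alpha\sigma^{-1}\,dB$ while the factor $(1-\alpha_{n+1}\Delta t)$ produces the damping $-\alpha\nu\,dt$.
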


Here $\Delta \sigma$ is the Laplacian of $\sigma$, $B$ is the Brownian motion, and the SDE is Ito.
The backward adaptation is defined in \Cref{d:backward}, and the integration of a backward adapted process $\alpha_t$ is the limit of \Cref{e:backito}.
Typically $\alpha_t \ge 0$, so the term $\alpha \nu dt$ damps the exponential growth of $\nu$; it is the portion of the divergence shifted to the probability kernel.
The main advantages of this result are: (1) it works for multiplicative noise where $\sigma$ depends on $x$; (2) the size of $\nu_T$ does not grow exponentially to $T$; (3) it does not make assumptions on dynamics such as hyperbolicity. 
The corresponding result of discrete-time in \Cref{t:divkernstep_covec} is rigorous and useful for numerical analysis.
We also consider several special cases:
\begin{itemize}
  \item For additive noise (this means $\sigma$ does not depend on $x$), \Cref{t:kernstep,t:ker_sde} give pure kernel formulas, which is simpler since it does not involve divergence terms.
  \item \Cref{t:divnstep,t:div_sde} give pure divergence formulas, which works for multiplicative noise, but the expression tends to grow exponentially fast, so it does not work for long-time.
  \item \Cref{t:divkernstep_covec_noh0,t:divker_sde_noh0} give formulas which does not involve $\nabla \log h_0$, the score of the initial distribution.
  This allows us to work with singular initial distributions.
\end{itemize}

\subsection{Literature review}

The averaged statistic of a random dynamical system is of central interest in applied sciences.
In this paper, the average is taken with respect to randomness.
We are interested in the scores $\nabla \log h_t$, where $h_t$ is the probability density at time $t$.
It is a fundamental tool for many applications in statistics and computing.
For example, \cite{MG25score,Tang2024,score_diffusion1,} proposed using scores of random systems in generative models in machine learning.
However, previously we do not have good formulas for computing such scores of random dynamical systems.

There are three basic methods for expressing and computing derivatives of the distributions of random systems: the path-perturbation method (shortened as the path method), the divergence method, and the kernel-differentiation method (shortened as the kernel method).
These methods can be used for derivatives with respect to the terminal conditions, initial conditions, and parameters of dynamics (known as the linear response).
The score is a derivative of the terminal densities. 
The relation and difference among the three basic methods can be illustrated in a one-step system, which is explained in \cite{Ni_kd}.
These three methods can run on sample paths, so they all have the potential to work in high dimensions.

The path-perturbation method computes the path-perturbation under fixed randomness (such as Brownian motion), then averages over many realizations of randomness.
This is also known as the ensemble method or the stochastic gradient method \cite{eyink2004ruelle,lucarini_linear_response_climate}.
It also includes the backpropagation method, which is the basic algorithm for machine learning.
It is good at stable systems and derivatives on initial conditions.
However, it is expensive for chaotic or unstable system; the work-around is to artificially reduce the size of the path-perturbation, such as shadowing or clipping methods \cite{Ni_nilsas,Ni_NILSS_JCP,clip_gradients2,FP25}, but they all introduce systematic errors.
Moreover, it computes the derivative of an averaged observable, but does not give the derivative of the probability density, so it is not useful for scores.

The divergence method is also known as the transfer operator method, since the perturbation of the measure transfer operator is some divergence.
It is good at unstable systems and derivatives of terminal densities.
It was used to solve the optimal response problem for expanding maps and hyperbolic systems \cite{optimal_response_23,optimalresponse2017,FP25optimalRes}.
Traditionally, for systems with contracting directions, the recursive divergence formula grows exponentially fast, so the cost of Monte-Carlo type algorithm is high for long-time.
The workaround is to use a finite-element-type algorithm, which has deterministic error rather than random sampling error, but is expensive in high dimensions \cite{Galatolo2014,Wormell2019a,Zhang2020}.
For SDEs, if we differentiate the Fokker–Planck equation, we get essentially the divergence formula; but even worse, since the extra second-order term cannot be expressed by a single path, it is even more difficult to do sampling.

The kernel-differentiation method works only for random systems. 
In SDEs, this is a direct result of the Cameron-Martin-Girsanov theorem \cite{CM44, MalliavinBook}; it is also called the likelihood ratio method or the Monte-Carlo gradient method \cite{Rubinstein1989,Reiman1989,Glynn1990}.
This method is more robust than the previous two for random dynamics, since the derivative hits the probability kernel, which is refreshed at each step \cite{sedro23}.
It was used to solve the optimal response problem for random systems \cite{DGJ25,ADF}.
It is good at taking derivative with respect to the dynamics; it naturally allows Monte-Carlo-type computation \cite{Ni_kd,PSW23,ZD21}; proofs for stationary measures were given in \cite{HaMa10,bahsoun20,GG19}.
However, it cannot handle multiplicative noise or perturbation on the diffusion coefficients.
It is also expensive when the noise is small.

Mixing two basic methods can overcome some major shortcomings.
For hyperbolic systems, the basic strategy is to partition the tangent space into stable and unstable subspace, then deal with them differently; this strategy is used in the proof of the linear response (this is the derivative with respect to the dynamics) of physical measures (this is the infinite-time-averaged statistics) \cite{Dolgopyat2004,Ruelle_diff_maps,Jiang2012,Gouezel2008,Baladi2017}.
For the pointwise expression of linear responses without exponentially growing or distributive terms, we need to use the path-perturbation on the stable, and divergence method on the unstable.
This is the fast response formula \cite{Ni_asl,TrsfOprt,fr,Ni_nilsas,far,vdivF}; it can be sampled pointwise; it is used to solve the optimal response problem in the hyperbolic setting \cite{GN25}. 
It is good at high dimensions and no-noise system.
However, it does not work when the hyperbolicity is poor \cite{Baladi2007,wormell22}, and only the unstable part involves derivatives of densities.

We can also mix the path-perturbation with the kernel methods.
The Bismut-Elworthy-Li formula \cite{Bismut84,EL94,PW19bismut} computes the derivative with respect to the initial conditions, but it does not handle $dB$-type perturbations.
The path-kernel method in \cite{dud} gives the linear response of the diffusion coefficients, where the main difficulty is that the perturbation is $dB$-type rather than $dt$-type.
It is good at systems with not too small noise and not too much unstableness, it does not require hyperbolicity, and it can handle perturbation on initial conditions.
However, it can be expensive when the noise is small and unstableness is big.
Moreover, only part of the expression is derivative on the terminal densities.

This paper should be the first example mixing the divergence and kernel methods.
Such a mixture is good at systems with not too much contraction and not too small noise; it does not require hyperbolicity.
Moreover, it naturally handles derivatives of terminal densities.

We also proposed a triad program in \cite{Ni_kd}, which requires advancing and mixing all three methods.
That might be the best solution for computing derivatives of random systems or approximate derivatives of deterministic systems.

Finally, for the particular purpose of scores of random systems, there are other results which do not fall into the above logic.
\cite{MG25score} proposes to use the Bismut formula for scores of SDE with additive noise, by transforming Skorokhod integration to Ito integration.
In contrast, our result for the additive noise in \Cref{t:ker_sde} is simpler and should be faster.
Our main result in \Cref{t:divker_sde} allows multiplicative noise, thanks to involving the divergence method.

\subsection{Structure of paper}

\Cref{s:notations} defines some basic notation and states some basic assumptions on regularities.
\Cref{s:1step} derives our results for one-step random systems.
\Cref{s:discrete} derives our results for many-step random systems.
\Cref{s:cts} formally passes to the continuous-time limit of SDEs.
In each section, we sequentially derive the kernel-differentiation formula, the divergence formula, and the divergence-kernel formula.
\Cref{s:howtouse} explains how to use the schedule process to temper the exponential growth of the covector process of $\nu$, which is caused by contraction in the system.

\Cref{s:numeric} considers numerical realizations.
\Cref{s:eg_1dim} uses \Cref{t:ker_sde,t:div_sde,t:divker_sde} to compute the score of the 1-dimensional Ornstein-Ulenback (OU) process with additive noise and multiplicative noise.
\Cref{s:eg_lorenz} uses \Cref{t:divker_sde_noh0} to compute the score of the 40-dimensional Lorenz 96 model \cite{Lorenz96} with multiplicative noise and singular initial distributions. 
These examples can not be solved by previous methods since the system is high-dimensional, has multiplicative noise, has contracting directions, has singular initial distributions, and is non-hyperbolic.

Our next step is to apply our new formulas to diffusive generative models in machine learning.
With a more powerful tool for computing scores, the previous framework in, for example, \cite{MG25score} can be greatly improved.
Then we seek to extend our method to the linear responses of diffusions, which should give us a more direct tool for optimizing the diffusion models.
We will also seek to accomplish the triad program proposed in \cite{Ni_kd}.

\section{Notations and assumptions}
\label{s:notations}

We define some geometric notations for spatial derivatives.
In this paper, we denote both vectors and covectors by column vectors in $\R^M$.
The product between a covector $\nu$ and a vector $v$ is denoted by $\cdot$, 
that is,
\begin{equation*}\begin{split}
  \nu\cdot v 
  := v\cdot \nu
  :=\nu^T v
  :=v^T \nu.
\end{split}\end{equation*}
Here $v^T$ is the transpose of matrices or (co)vectors.
Then we use $\ip{,}$ to denote the inner-product between two vectors
\begin{equation*}\begin{split}
  \ip{u,v}:=u^T v.
\end{split}\end{equation*}
Note that $\DB$ may be either a vector or a covector.

Take a finite time interval $[0,T]$, our goal is to derive a pathwise expression for the score function $\nabla \log{h_T} = \frac{\nabla h_T}{h_T}$, where 
\begin{eqnarray*}
  \nabla (\cdot):=\pp{(\cdot)}x,\quad
  \nabla_v (\cdot):=
  \nabla (\cdot) v :=
  \pp{(\cdot)}x v,
\end{eqnarray*}
Here $\nabla_YX$ denotes the (Riemann) derivative of the tensor field $X$ along the direction of $Y$.
It is convenient to think that $\nabla$ always adds a covariant component to the tensor.
The divergence of a vector field $X$ is defined by
\[ \begin{split}
  \div X:= \sum_{i=1}^M \nabla_{e_i} X,
\end{split} \]
where $e_i$ is the $i$-th unit vector of the canonical basis of $\R^M$.

For a map $g$, let $g_*$ be the Jacobian matrix, or the pushforward operator on vectors; readers may as well denote this by $\nabla g$, but $g_*$ notation works better if we want to work on manifolds in the future.
We define the covector $\div g_*$, the divergence of the Jacobian matrix $g_*$, as
\begin{equation} \begin{split} \label{e:zhao}
  \div g_* :=
  \frac{\nabla \left|  g_{*} \right|}{\left| g_{*} \right|},
\end{split} \end{equation}
where $\left|  g_{*} \right|$ is the determinant of the matrix.
See \cite{TrsfOprt,far} for why this is called a divergence; or, for the special case where $g$ is from a time-discretized SDE, see \Cref{t:div}.

For discrete-time results in \Cref{s:1step,s:discrete}, we assume that 
\begin{assumption*} 
  The probability kernel $k$, drift $f$, and diffusion $\sigma$ are $C^1$ functions.
  The density of the initial distribution $h_0$ is $C^1$ when we use it.
\end{assumption*}
For \Cref{t:divkernstep_covec_noh0}, we do not explicitly use $h_0$, so the initial distribution can be singular.
Our derivation is rigorous for the discrete-time case, but is \textit{formal} when passing to the continuous-time limit, so we do not list the technical assumptions here, and leave the rigorous proof to a later paper.
We just assume that all integrations, averages, and change of limits are legit.
We still organize our continuous-time results by lemmas and theorems, but this is just a style to organize the paper and does not imply rigorousness.

\section{One-step results} \label{s:1step}

Our ambient space is the $M$-dimensional Euclidean space $\R^M$.
Let $x_0\sim h_0$, $b_0\sim k$, the one-step dynamics is 
\begin{eqnarray*}
  x_1 = f(x_0) + \sigma (x_0) b_0.
\end{eqnarray*}
Let $h_1$ denote the density of $x_1$.
The basic ingredients of our work are the kernel-differentiation formula and the divergence formula for $\nabla \log h_1$ in one-step system.

\subsection{Kernel-differentiation formula} \label{s:ker1step}

First, we give an expression of $h_1$.
For any $C^0$ observable function $\Phi$, denote the expectation of $\Phi(x_1)$ as
\begin{equation*} \label{e:start}
  \E{\Phi(x_1)} := \int \Phi(x_1) h_1(x_1) dx_1
  = \iint \Phi(x_1) h_0(x_0) k(b_0) dx_0 db_0
\end{equation*}
In the last expression, $x_1$ is a function of the two dummy variables $x_0$ and $b_0$.
Change the variable $b_0$ to $x_1$; this involves a nontrivial $\sigma^M(x_0)$ determinant, so
\[
  \E{\Phi(x_1)}
  = \iint \Phi(x_1) h_0(x_0)p(x_0,x_1) dx_0 dx_1,
\]
where the transition function is
\begin{equation*}\begin{split}
  p(x_0,x_1) := \sigma^{-M}(x_0) 
  k\left(\frac{x_1-f(x_0)}{\sigma(x_0)}\right).
\end{split}\end{equation*}
By comparison, we have an expression of $h_1$:
\begin{equation}\label{e:h1}
  h_1(x_1) = \int h_0(x_0)p(x_0,x_1) dx_0.
\end{equation}

Take spatial derivative of $h_1$, then the probability kernel $k$ will be differentiated, and we get the kernel-differentiation formula for one-step.

\begin{lemma} [Kernel formula for one-step score] \label{t:ker1step}
\begin{equation*}
  \nabla \log h_1(x_1) 
  = \E{\frac 1{\sigma(x_0)} \nabla \log k\left(b_0\right) \middle| x_1 },
  \quad \textnormal{where} \quad 
  b_0 = \frac{x_1-f(x_0)}{\sigma(x_0)}.
\end{equation*}
\end{lemma}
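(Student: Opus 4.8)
The plan is to differentiate the integral representation \eqref{e:h1} of $h_1$ under the integral sign, perform a log-derivative manipulation on the kernel, and then recognize the resulting ratio of integrals as a conditional expectation. Since $\nabla \log h_1 = \nabla h_1 / h_1$, I would first write $\nabla h_1(x_1) = \int h_0(x_0)\, \nabla_{x_1} p(x_0,x_1)\, dx_0$, where the standing $C^1$ and integrability assumptions justify moving $\nabla$ past the integral in $x_0$.

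Next I would compute $\nabla_{x_1} p$ by the chain rule. The prefactor $\sigma^{-M}(x_0)$ is independent of $x_1$, so only $k(b_0)$ is differentiated; since $b_0 = (x_1 - f(x_0))/\sigma(x_0)$ has Jacobian $\sigma^{-1}(x_0)$ in $x_1$, the chain rule gives $\nabla_{x_1} k(b_0) = \sigma^{-1}(x_0)\,(\nabla k)(b_0)$. The crucial algebraic step is the log-derivative (likelihood-ratio) identity $\nabla k = k\,\nabla \log k$, which converts the derivative of the kernel back into a multiple of the kernel:
\[
  \nabla_{x_1} p(x_0,x_1) = \frac{1}{\sigma(x_0)}\, (\nabla \log k)(b_0)\; p(x_0,x_1).
\]
This is exactly the mechanism by which the derivative "hits the probability kernel" and surfaces the score $\nabla \log k$ of the noise law.

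Finally I would divide by $h_1$ and reinterpret the quotient as an average. By Bayes' rule, the conditional density of $x_0$ given $x_1$ is $h_0(x_0)\,p(x_0,x_1)/h_1(x_1)$, so that
\[
  \nabla \log h_1(x_1) = \frac{\int h_0(x_0)\, \sigma^{-1}(x_0)\,(\nabla \log k)(b_0)\, p(x_0,x_1)\, dx_0}{\int h_0(x_0)\, p(x_0,x_1)\, dx_0} = \E{\frac{1}{\sigma(x_0)}\,\nabla \log k(b_0) \,\middle|\, x_1},
\]
which is the claimed formula. The main obstacle here is conceptual rather than computational: one must correctly identify the posterior weight $h_0(x_0)p(x_0,x_1)/h_1(x_1)$ as the conditional law of $x_0$ given the observed $x_1$, so that the ratio of integrals collapses into the stated conditional expectation. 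The differentiation-under-the-integral and the chain-rule factor $1/\sigma(x_0)$ are routine given the regularity hypotheses.
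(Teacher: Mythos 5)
Your proposal is correct and follows essentially the same route as the paper's proof: differentiate the representation $h_1(x_1)=\int h_0(x_0)p(x_0,x_1)\,dx_0$ under the integral (the paper writes the resulting factor as $\sigma^{-M-1}(x_0)\nabla k(b_0)$, which is exactly your $\sigma^{-M}$ prefactor times the chain-rule factor $\sigma^{-1}$), convert $\nabla k$ to $k\,\nabla\log k$, divide by $h_1$, and recognize the posterior weight $h_0(x_0)p(x_0,x_1)/h_1(x_1)$ as the conditional law of $x_0$ given $x_1$. No gaps; your write-up merely makes the chain rule and the Bayes-rule identification more explicit than the paper does.
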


\begin{proof}
Take derivative of \Cref{e:h1} with respect to $x_1$, 
\begin{equation*}
  \nabla h_1(x_1) 
  %= \int h_0(x_0) \pp{}{x_1} p(x_0,x_1) dx_0
  = \int h_0(x_0)\sigma^{-M-1}(x_0) 
  \nabla k\left(b_0\right) dx_0,
  \quad \textnormal{where} \quad 
  b_0 = \frac{x_1-f(x_0)}{\sigma(x_0)}.
\end{equation*}
Divide both sides by $h_1$, note that $\nabla \log k = \nabla k / k$, then we get
\begin{equation*}
  \nabla \log h_1(x_1) 
  = \frac 1{h_1(x_1)}\int
  \left[
  \sigma^{-1}(x_0) \nabla \log k\left(b_0\right) 
  \right]
  h_0(x_0) p(x_0,x_1) dx_0.
\end{equation*}
Then apply the definition of the conditional expectation.
\end{proof}

\subsection{Divergence formula} \label{s:div1step}

This section derives the divergence formula.
In \Cref{e:h1}, change the variable $x_0$ to $b_0=(x_1-f(x_0)) / \sigma(x_0)$ under fixed $x_1$.
Define
\begin{equation*}\begin{split}
  x_1 = g_{b_0}(x_0) := f(x_0) + \sigma(x_0) b_0
\end{split}\end{equation*}
Changing variable causes a nontrivial Jacobian determinant,
\[ \begin{split}
  \left| \dd {b_0}{x_0} \right|
  = \sigma^{-M}(x_0) \left| \pp {f(x_0)}{x_0} + b_0 \pp {\sigma (x_0)}{x_0} \right|
  =: \sigma^{-M}(x_0) \left| g_{b_0*} (x_0) \right|,
\end{split} \]
where $g_{b_0*}$ is the pushforward operator of $g_{b_0}$, and $ \left| g_{b_0*} \right| $ is its determinant.
So we can express $h_1$ by an integration over $b_0$,
\begin{equation} \begin{split} \label{e:h1_b}
  h_1(x_1) = \int h_0(x_0)k(b_0) \left|  g_{b_0*} (x_0) \right| ^{-1} db_0,
  \quad \textnormal{where} \quad 
  x_0 = g_{b_0}^{-1}(x_1).
\end{split} \end{equation}
For now, we assume $g_{b_0}$ is injective, since this is the case when we discretize SDEs; if it is $n$-to-1, then we need to further sum over its preimage.

\begin{lemma} [Divergence formula for one-step score] \label{t:div1step}
\begin{equation*} \begin{split}
  \nabla \log h_1 (x_1) 
  = 
  \E{ g_{b_0}^{*-1} \left(\nabla \log h_0 (x_0) - (\div g_{b_0*}) (x_0)\right) \middle| x_1 },
\end{split} \end{equation*}
where $g_{b_0}^{*-1}$ is the inverse of the pullback operator on covectors.
\end{lemma}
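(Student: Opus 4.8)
The plan is to differentiate the expression \Cref{e:h1_b} for $h_1$ directly in the terminal variable $x_1$, and then repackage the result as a conditional expectation. The one genuine subtlety is that in \Cref{e:h1_b} the integration variable is $b_0$, while the preimage $x_0 = g_{b_0}^{-1}(x_1)$ depends on $x_1$; so differentiating in $x_1$ forces a chain rule through the inverse map $g_{b_0}^{-1}$, and this is exactly where the covector transport $g_{b_0}^{*-1}$ will arise.

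First I would abbreviate the integrand as $\psi_{b_0}(x_0) := h_0(x_0)\,|g_{b_0*}(x_0)|^{-1}$, so that \Cref{e:h1_b} reads $h_1(x_1) = \int \psi_{b_0}\big(g_{b_0}^{-1}(x_1)\big)\, k(b_0)\, db_0$. Differentiating under the integral (justified by the $C^1$ assumptions, with each fixed $b_0$ giving a fixed map $g_{b_0}$), the chain rule yields $\nabla_{x_1}\big[\psi_{b_0}\circ g_{b_0}^{-1}\big] = g_{b_0}^{*-1}\big(\nabla_{x_0}\psi_{b_0}\big)$: the Jacobian of $g_{b_0}^{-1}$ is $g_{b_0*}^{-1}$, and, in the column-covector convention of \Cref{s:notations}, transporting the gradient covector from $x_0$ to $x_1$ is precisely the inverse pullback $g_{b_0}^{*-1} = (g_{b_0*}^T)^{-1}$. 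This is the step I expect to be the main obstacle: one must verify carefully that the chain rule through the \emph{inverse} map produces exactly the inverse pullback operator named in the statement, keeping the vector/covector roles straight throughout.

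Next I would expand $\nabla_{x_0}\psi_{b_0}$ by the product rule. The determinant factor simplifies via the definition \Cref{e:zhao} of $\div g_*$: since $\nabla\big(|g_{b_0*}|^{-1}\big) = -\,|g_{b_0*}|^{-1}\,\tfrac{\nabla|g_{b_0*}|}{|g_{b_0*}|} = -\,|g_{b_0*}|^{-1}\,\div g_{b_0*}$, I obtain $\nabla_{x_0}\psi_{b_0} = |g_{b_0*}|^{-1}\big(\nabla h_0 - h_0\,\div g_{b_0*}\big)$. Substituting $\nabla h_0 = h_0\,\nabla\log h_0$ factors out $h_0$ and isolates the combination $\nabla\log h_0 - \div g_{b_0*}$ inside the integral, with the scalar $h_0\,|g_{b_0*}|^{-1}$ pulled outside the linear operator $g_{b_0}^{*-1}$.

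Finally I would divide by $h_1(x_1)$ and recognize the remaining weight $h_0(x_0)\,k(b_0)\,|g_{b_0*}|^{-1}\,db_0 / h_1(x_1)$ as the conditional law of $b_0$ given $x_1$ — legitimate precisely because, by \Cref{e:h1_b}, $h_1(x_1)$ is the normalizing integral of that very weight, so it integrates to one. The weighted integral of $g_{b_0}^{*-1}\big(\nabla\log h_0 - \div g_{b_0*}\big)$ against this conditional law is then, by definition, the claimed $\E{\,g_{b_0}^{*-1}\big(\nabla\log h_0(x_0) - (\div g_{b_0*})(x_0)\big) \,\middle|\, x_1}$, which completes the argument.
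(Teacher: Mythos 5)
Your proposal is correct and follows essentially the same route as the paper's proof: differentiate \Cref{e:h1_b} through the dependence $x_0 = g_{b_0}^{-1}(x_1)$ (this is exactly where the paper also produces the covector transport), apply the product rule together with the definition \Cref{e:zhao} of $\div g_{b_0*}$, and divide by $h_1$ to read off the conditional expectation. The only cosmetic differences are that the paper pairs against an arbitrary direction $v_1$ (setting $v_0 = g_{b_0*}^{-1}v_1$) rather than manipulating the operator $g_{b_0}^{*-1}$ directly, and it changes the dummy variable back to $x_0$ before invoking the conditional expectation --- that change of variables is the precise justification for your last step, where normalization alone does not quite identify the weight as the conditional law.
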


\begin{proof}
Take the spatial derivative.
Note that in \Cref{e:h1_b}, the dummy variable is not differentiated; the derivative hits $h_0$ and $ g_{b_0*}$ via $x_0$, since it depends on $x_1$.
For convenience, assume that we are interested in changing $x_1$ in a direction $v_1$, and $v_0:=g_{b_0*}^{-1} v_1$.
Note that here $v_1$ is independent of $x_0$.
\begin{equation*} \begin{split}
  \nabla_{v_1} h_1 (x_1) 
  = \int \left(\frac{\nabla_{v_0} h_0}{h_0}(x_0)
  - \frac{\nabla_{v_0} \left|  g_{b_0*} \right|}{\left| g_{b_0*} \right|}(x_0)
  \right)
  h_0(x_0)k(b_0) \left|  g_{b_0*} (x_0) \right| ^{-1} db_0,
\end{split} \end{equation*}
By definition of the divergence in \Cref{e:zhao},
\begin{equation*} \begin{split}
  \nabla_{v_1} h_1 (x_1) 
  = \int \left(\nabla_{v_0} \log h_0 (x_0)
  - (\div g_{b_0*}) (x_0) v_0
  \right)
  h_0(x_0)k(b_0) \left|  g_{b_0*} (x_0) \right| ^{-1} db_0,
\end{split} \end{equation*}
Then we can change back the dummy variable to $x_0$,
\begin{equation} \begin{split} \label{e:deng}
  \nabla_{v_1} h_1 (x_1) 
  = \int \left(\nabla_{v_0} \log h_0 (x_0)
  - (\div g_{b_0*}) (x_0) v_0
  \right)
  h_0(x_0) p(x_0,x_1) dx_0.
\end{split} \end{equation}
Then divide by $h_1(x_1)$ to get
\begin{equation*} \begin{split}
  \nabla_{v_1} \log h_1 (x_1) 
  = \E{ \left(\nabla \log h_0 (x_0) - (\div g_{b_0*}) (x_0) \right)v_0 \middle| x_1 }
  \\
  = \E{ \left(\nabla \log h_0 (x_0) - (\div g_{b_0*}) (x_0) \right) g_{b_0*}^{-1}v_1 \middle| x_1 }.
\end{split} \end{equation*}
Since $ v_1 $ was arbitrarily chosen, we can obtain the formula in the lemma.
\end{proof}

Readers can see \cite{TrsfOprt} for more information about \Cref{t:div1step}, including a derivation in the matrix notation, and an extension to (unstable) submanifolds.
The two one-step linear response formulas are equivalent under integration by parts.
But they perform quite differently for numerical applications and in proofs, since they put the derivative on different terms.
The difference is even sharper for multi-step and infinite-step systems.

\subsection{Divergence-kernel formula} \label{s:divker1step}

We can combine the kernel-differentiation formula and the divergence formula.
In the following lemma, $\alpha_1$ may depend on $x_1$ but not $x_0$, otherwise we would have more terms.
In other words, our ambient $\sigma$-algebra is $\sigma(x_0,x_1)$, the $\sigma$-algebra generated by $x_0$ and $x_1$ (we overload the notation $\sigma$ for both the $\sigma$-algebra and the diffusion coefficient), and $\alpha_1\in \sigma(x_1)$.

\begin{lemma} [Divergence-kernel formula for one-step score] \label{t:divker1step}
For any matrix $\alpha_1\in\R^{M\times M}$,
\begin{equation*} \begin{split}
  \nabla \log h_1 (x_1) 
  = 
  \E{
  \frac {1} {\sigma(x_0)} \alpha_1^T \nabla \log k\left(b_0\right) 
  + 
  (1-\alpha_1^T)  g_{b_0}^{*-1} \left(\nabla \log h_0  - \div g_{b_0*} \right) (x_0) 
  \middle| x_1 }.
\end{split} \end{equation*}
\end{lemma}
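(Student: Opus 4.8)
The plan is to obtain the claimed identity as a matrix-weighted combination of the two one-step formulas already in hand, with the weights summing to the identity. The key structural observation is that both \Cref{t:ker1step} and \Cref{t:div1step} express the \emph{same} deterministic quantity $\nabla \log h_1(x_1)$ as a conditional expectation given $x_1$; since $\nabla \log h_1(x_1)$ is a function of $x_1$ alone, we are free to split it into any two pieces carried by $x_1$-measurable matrix weights and to represent one piece by the kernel formula and the other by the divergence formula.

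Concretely, I would first left-multiply the kernel formula of \Cref{t:ker1step} by the matrix $\alpha_1^T$. Because $\alpha_1 \in \sigma(x_1)$, conditioning on $x_1$ treats $\alpha_1^T$ as a constant, so linearity of the conditional expectation yields
\[
  \alpha_1^T \nabla \log h_1(x_1)
  = \E{\frac{1}{\sigma(x_0)} \alpha_1^T \nabla \log k(b_0) \middle| x_1}.
\]
Next I would left-multiply the divergence formula of \Cref{t:div1step} by $1-\alpha_1^T$, where $1$ is the $M\times M$ identity, obtaining by the same reasoning
\[
  (1-\alpha_1^T)\nabla \log h_1(x_1)
  = \E{(1-\alpha_1^T)\, g_{b_0}^{*-1}\left(\nabla \log h_0 - \div g_{b_0*}\right)(x_0) \middle| x_1}.
\]

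Finally I would add the two displays. On the left the weights sum to the identity, $\alpha_1^T + (1-\alpha_1^T) = 1$, so the left-hand side collapses to $\nabla \log h_1(x_1)$; on the right, linearity of the conditional expectation merges the two conditional expectations into the single expression asserted by the lemma. This completes the argument.

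There is essentially no analytic obstacle: once \Cref{t:ker1step,t:div1step} are available, the content is purely algebraic. The one point that genuinely uses the stated hypothesis — and the step I would flag carefully — is the measurability assumption $\alpha_1 \in \sigma(x_1)$, which is exactly what licenses pulling $\alpha_1^T$ and $1-\alpha_1^T$ through the conditional expectation. If $\alpha_1$ were allowed to depend on $x_0$ as well, this pull-through would fail and surviving cross terms would spoil the identity, as the surrounding discussion already notes.
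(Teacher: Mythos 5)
Your proof is correct, but it takes a genuinely different route from the paper's. You treat \Cref{t:ker1step,t:div1step} as black boxes: since both express the same $\sigma(x_1)$-measurable quantity $\nabla\log h_1(x_1)$, you weight the first by $\alpha_1^T$ and the second by $1-\alpha_1^T$, pull the $x_1$-measurable weights inside the conditional expectations (``taking out what is known''), and add, so the whole content is the algebraic identity $\alpha_1^T+(1-\alpha_1^T)=1$ plus linearity. The paper instead re-derives the result at the level of the unnormalized integrals: it splits the perturbation direction $v_1=\alpha_1 v_1+(1-\alpha_1)v_1$, runs the kernel-differentiation computation on the first piece and the divergence change-of-variables computation on the second, and only then divides by $h_1$. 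For the one-step lemma as stated, your argument is shorter and cleaner, and it correctly isolates the one hypothesis doing real work ($\alpha_1\in\sigma(x_1)$, so no cross terms arise). What the paper's heavier route buys is the intermediate directional, integral-level identity (the first display of its proof, and \Cref{e:yuan}): in the $N$-step proof of \Cref{t:divkernstep}, the recursion is applied to $\nabla_{v_{N-1}}h_{N-1}$ sitting \emph{inside} an integral, which requires exactly that unnormalized form rather than the finished conditional-expectation statement your argument produces. So your proof suffices for this lemma, but one would still need the paper's integral manipulation (or a re-derivation of it) to carry out the multi-step extension.
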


\begin{remark*}
  We can decompose $v_1$ into any two vectors and still have \Cref{e:yuan}.
  But to get the score function we need to decompose $v_1$ by linear operators.
\end{remark*} 

\begin{proof}
Pick any $v_1$, denote 
$ v_0 := g_{b_{0}*}^{-1} (1-\alpha_1) v_1 $, 
where $ b_{0} = \frac{x_1-f(x_{0})}{\sigma(x_{0})} $.
Since $\alpha_1$ and $v_1$ do not dependent on $x_0$ (this means fixing $x_1$ then changing $x_0$ does not change $\alpha_1$; it should not be confused with probabilistic independence),
\begin{equation*} \begin{split}
  \nabla_{v_1} h_1 (x_1) 
  = \nabla_{\alpha_1 v_1} h_1 (x_1) 
  + \nabla_{(1-\alpha_1)v_1} h_1 (x_1) 
  \\
  = \int \frac { \nabla_{\alpha_1v_1} \log k (b_{0}) } {\sigma(x_{0})} 
  h_{0} (x_{0}) p(x_{0},x_{N}) dx_{0}
  + \int \left(\nabla_{v_{0}} h_{0} - \div g_{b_{0}*} \cdot v_0  h_{0} \right)(x_{0})
  p(x_{0},x_{N}) dx_{0}.
\end{split} \end{equation*}
Then divide by $h_1$ to get
\begin{equation} \begin{split} \label{e:yuan}
  \nabla_{v_1} \log h_1 (x_1) 
  = 
  \E{
  \frac {\nabla \log k \left(b_0\right)} {\sigma(x_0)} \cdot \alpha_1 v_1
  + 
  \left(\nabla \log h_0 - \div g_{b_0*} \right) (x_0) \cdot g_{b_{0}*}^{-1} (1-\alpha_1) v_1
  \middle| x_1 }.
\end{split}\end{equation}
Then we can move all the operations away from $v_1$,
\begin{equation*}\begin{split}
  \E{
  \left(\alpha_1^T \frac {\nabla \log k \left(b_0\right)} {\sigma(x_0)}\right) \cdot  v_1
  + 
  \left(
  (I-\alpha_1^T) g_{b_{0}}^{*-1} \left(\nabla \log h_0  - \div g_{b_0*} \right) (x_0)
  \right) \cdot
  v_1
  \middle| x_1 }.
\end{split} \end{equation*}
Since $v_1$ is arbitrary, we proved the lemma.
\end{proof}

\section{Discrete-time many-steps results} \label{s:discrete}

Starting from the initial density $x_0 \sim h_0$, consider the discrete-time random dynamical system
\begin{equation} \begin{split} \label{e:discreteEq}
  x_{n+1} = f(x_n) + \sigma(x_n)b_n,
\end{split} \end{equation}
where $b_n$ is i.i.d. distributed according to the kernel density $k$.
Denote the density of $x_n$ by $h_n$.
The system runs for a total of $N$ steps.

We recursively apply the results in \Cref{s:1step} to obtain the score formulas of $h_N$, which is the density of $x_N$.
To shorten our expressions, let 
\begin{equation*}\begin{split}
  p(x_ {n\sim N} ) := p(x_n, x_{n+1}) \cdots p(x_{N-1}, x_N),
  \quad \textnormal{} \quad 
  d x_ {n\sim N} := dx_n \cdots d x_N.
\end{split}\end{equation*}

\begin{definition}
\label{d:backward}
  We say that a process $\{ \alpha_n \}_{ n=0 }^N$ is \textit{backward adapted}, or adapted to the backward filtration, if $\alpha_n$ belongs to the $\sigma$-algebra $\sigma(x_n,\ldots, x_N)$.
  We say that $\{ \beta_n \}_{ n=0 }^N$ is \textit{backward predictable} if $\beta_n$ belongs to the $\sigma$-algebra $\sigma(x_n+1,\ldots, x_N)$.
\end{definition}

\subsection{Kernel-differentiation formula} \label{s:kernstep}

For the kernel-differentiation method, there is no apparent need to do things recursively: \Cref{t:ker1step} does not depend on quantities in the previous step.
However, it does not work well when the system comes from discretizing an SDE, since $\nabla \log k$ is very large for the Gaussian kernel in one time-step.
Hence, using the method in \cite{dud}, we devise a formula in which we spread the kernel-differentiation to many steps.

For this subsection, we assume the simple case of \textit{additive noise}, that is, $\sigma$ is a constant which does not depend on $x$.
We want to compute the score of 
\begin{equation*}\begin{split}
  h_N(x_N) 
  = \int h_0(x_0)p(x_0,x_1) \cdots p(x_{N-1},x_N) dx_0\cdots dx_{N-1}.
\end{split}\end{equation*}
Compared with the result in \cite{MG25score}, which is based on the Bismut-Elworthy-Li formula, our formula is simpler since it does not involve path-perturbations.
For the more complicated case of multiplicative noise, where $\sigma$ depends on $x$, it seems that we must involve the divergence method in later parts of the paper.

\begin{theorem} [N-step kernel formula for score] \label{t:kernstep}
For the case of additive noise ($\sigma$ is independ of $x$), for any predictable or backward predictable process $\beta_n \in [0,1]$ with $\beta_N = 1$,
\begin{equation*}\begin{split}
  \nabla \log h_N(x_N) 
  = 
  \E{
  \beta_0 \nabla \log h_0(x_0)
  +
  \frac {1} {\sigma} 
  \sum _{n=0}^{N-1} 
  \left(\beta_{n+1} I - \beta_n \nabla f^T(x_n) \right)
  \nabla \log k \left(b_n\right)
  \middle|
  x_N
  } 
\end{split}\end{equation*}
\end{theorem}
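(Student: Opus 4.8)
The plan is to collapse the whole $N$-step formula onto the joint path density and a single mean-zero identity. Write $P := h_0(x_0)\prod_{n=0}^{N-1}p(x_n,x_{n+1})$ for the density of the whole trajectory, abbreviate $p_n:=p(x_n,x_{n+1})$, and let $\nabla_{x_m}$ denote the gradient in the $m$-th block $x_m\in\R^M$. For additive noise $p_n=\sigma^{-M}k(b_n)$ with $b_n=\sigma^{-1}(x_{n+1}-f(x_n))$ and $\sigma$ constant, so the chain rule yields the two elementary identities $\nabla_{x_{n+1}}\log p_n=\frac1\sigma\nabla\log k(b_n)$ and $\nabla_{x_n}\log p_n=-\frac1\sigma\nabla f^T(x_n)\nabla\log k(b_n)$. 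Substituting these into the claimed formula and collecting the terms carrying the same block index $m$, the integrand telescopes exactly into $\sum_{m=0}^N\beta_m\nabla_{x_m}\log P$: block $x_0$ gathers $\beta_0(\nabla\log h_0+\nabla_{x_0}\log p_0)$, each interior block gathers $\beta_m(\nabla_{x_m}\log p_{m-1}+\nabla_{x_m}\log p_m)$, and the last block gathers $\beta_N\nabla_{x_N}\log p_{N-1}$, where I used $\beta_N=1$. Hence the theorem is equivalent to
\[
\nabla\log h_N(x_N)=\E{\sum_{m=0}^N\beta_m\nabla_{x_m}\log P \middle| x_N}.
\]

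Next I would split this sum at $m=N$. For the top term, since $\beta_N=1$ and only $p_{N-1}$ depends on $x_N$, differentiating $h_N(x_N)=\int P\,dx_0\cdots dx_{N-1}$ under the integral sign gives $\E{\nabla_{x_N}\log P\mid x_N}=\nabla\log h_N(x_N)$, which is just \Cref{t:ker1step} applied to the last transition. For every $m<N$ I would show the contribution is mean zero: writing the conditional expectation as $h_N(x_N)^{-1}\int\beta_m\nabla_{x_m}P\,dx_0\cdots dx_{N-1}$ and integrating the block $x_m$ first, (backward) predictability makes $\beta_m$ independent of $x_m$, so it factors out and $\int_{\R^M}\nabla_{x_m}P\,dx_m=0$ because it is the integral of a total gradient. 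Summing the $N+1$ contributions leaves only the top term and proves the identity.

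The index bookkeeping in the first step is routine once the two per-step derivative identities are written down; the conceptual core is the vanishing of the interior terms, which says that the estimator's value is independent of the schedule $\{\beta_m\}_{m<N}$. The point to make carefully there is that (backward) predictability is exactly the hypothesis needed for $\beta_m$ to leave the $x_m$-integral. I expect the genuine technical obstacle to be justifying $\int_{\R^M}\nabla_{x_m}P\,dx_m=0$, i.e.\ that the boundary terms at infinity vanish and that Fubini permits integrating $x_m$ before the other blocks; this requires the path density---a product of $h_0$ with Gaussian-type kernels---to decay in each block. Finally, it is worth recording that the constraint $\beta_n\in[0,1]$ plays no role in the identity itself: any $x_m$-independent schedule with $\beta_N=1$ works, and $[0,1]$ is imposed only later to temper the growth of the covector estimator.
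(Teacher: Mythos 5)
Your proof is correct, but it takes a genuinely different route from the paper's. The paper proves \Cref{t:kernstep} by a perturbation/change-of-variables argument: it writes $h_N(x_N+\gamma v_N)$ as a path integral, shifts every dummy variable by $x_n^\gamma = x_n + \gamma\beta_n v_N$, uses (backward) predictability to make the Jacobian of this shift block-triangular with identity diagonal blocks (hence of determinant one), and then differentiates in $\gamma$ at $\gamma=0$; the weighted sum of per-block scores emerges from the $\gamma$-derivative of the shifted integrand, and no boundary terms at infinity ever appear. You instead start from the claimed estimator, collapse it onto $\sum_{m=0}^N \beta_m \nabla_{x_m}\log P$ via the two chain-rule identities (your bookkeeping here is right), and then kill each interior block by Fubini plus $\int_{\R^M} \nabla_{x_m} P \, dx_m = 0$, keeping only the top block, which reduces to \Cref{t:ker1step} applied to the last transition --- an integration-by-parts, mean-zero-score argument. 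What your route buys: it makes transparent that the only real hypothesis is that $\beta_m$ does not depend on the block $x_m$, so the paper's remark about mixed adaptedness patterns (first half predictable, second half backward predictable) and your observation that $\beta_n \in [0,1]$ is irrelevant to the identity become immediate corollaries rather than afterthoughts; it also isolates precisely why the estimator is schedule-independent. What it costs: you must justify the vanishing boundary terms and the interchange of integration order, which the change-of-variables route sidesteps (its burden is instead bijectivity of the shift and differentiation under the integral sign). Given the paper's standing assumption that all integrations, averages, and changes of limits are legitimate, the two arguments sit at the same level of rigor, and yours is arguably the more elementary and more general one.
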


\begin{remark*}
If we set $\beta_0=0$, then we do not need the derivative of $h_0$.
We can have a more fancy predictability pattern of $\beta$, for example, we may let $\beta_0\sim \beta_{\frac N2}$ be predictable, and the second half be backward predictable.
\end{remark*}

\begin{proof}
For convenience, assume that we make a perturbation $v_N$ to $x_N$, 
\begin{equation*}\begin{split}
  \nabla_{v_N}h_N(x_N) 
  = \lim_{\gamma\rightarrow 0} \frac 1{\gamma} h_N(x_N + \gamma v_N) 
\end{split}\end{equation*}
where
\begin{equation*}\begin{split}
  h_N(x_N + \gamma v_N)  
  = \int h_0(x^\gamma_0)p(x^\gamma_0,x^\gamma_1) \cdots p(x^\gamma_{N-1},x_N + \gamma v_N) dx^\gamma_0\cdots dx^\gamma_{N-1}.
\end{split}\end{equation*}

Change the variable $x^\gamma_{n}$ to $x_{n}$ according to 
\begin{equation*}
  x_{n}^\gamma = x_{n} + \gamma \beta_n v_{N}.
\end{equation*}
Since $v_N$ is constant and $\beta_n$ is either predictable or backward predictable, the Jacobian matrix of this change of variables is either upper or lower block triangular with diagonal blocks being identity matrices, so the determinant of changing variables is trivial.
Another way to look at this is that if $\beta_n$ is predictable, then we change variables sequentially from small to large $n$; if $\beta_n$ is backward predictable, then we change variables in the reverse order, then each change of variable incurs a trivial Jacobian determinant.
Hence,
\begin{equation*} \begin{split}
  h_N(x_N+\gamma v_N)  
  = \int h_0\left( x_0 + \gamma \beta_0 v_{N} \right) 
  p \left(
  x_0 + \gamma \beta_0 v_{N} ,
  x_1 + \gamma \beta_1 v_{N} \right) \\
  \cdots p \left(
  x_{N-1} + \gamma \beta_{N-1} v_{N} ,
  x_N + \gamma v_N \right) 
  dx_{0\sim N-1}.
\end{split} \end{equation*}

Then we can take derivative with respect to $\gamma$.
Recall that 
\begin{equation*}\begin{split}
  p\left(x_{n}   + \gamma \beta_{n}   v_{N} ,
         x_{n+1} + \gamma \beta_{n+1} v_{N} \right)
  =:p(x^\gamma_{n},x^\gamma_{n+1})
  = \frac 1{\sigma^M} k\left(\frac{x_{n+1}^\gamma-f(x_{n}^\gamma)} {\sigma}\right),
\end{split}\end{equation*}
denote $ b_n := \frac{x_{n+1}-f(x_{n})} {\sigma}$, then
\begin{equation*}\begin{split}
  \frac{\delta p(x^\gamma_{n},x^\gamma_{n+1})}
  {p(x_{n},x_{n+1})}
  = 
  \frac {\nabla k} k \left(b_n\right)
  \cdot\left( \beta_{n+1} - \beta_{n} \nabla f (x_n) \right)
  \frac{v_N} \sigma.
\end{split}\end{equation*}
Substitute into the derivative of $h_N$, we get
\begin{equation*}\begin{split}
  \nabla_{v_N}h_N(x_N) 
  =  
  \int \beta_0 \nabla_{v_N}  h_0(x_0)  p(x_{0\sim N}) dx_{0\sim N-1}
  \\ 
  + \int\sum _{n=0}^{N-1} 
  \left(\frac {\nabla k} k \left(b_n\right)
  \cdot \left( \beta_{n+1} - \beta_{n} \nabla f (x_n) \right) \frac{v_N} \sigma
  \right) 
  h_0(x_0) p(x_{0\sim N}) dx_{0\sim N-1}.
\end{split}\end{equation*}
Divide by $h_N(x_N)$ on both sides, we get
\begin{equation*}\begin{split}
  \nabla_{v_N} \log h_N(x_N) 
  = 
  \E{
  \beta_0 \nabla \log h_0(x_0) \cdot v_N 
  + \frac {1} { \sigma} 
  \sum _{n=0}^{N-1} 
  \frac {\nabla k} k \left(b_n\right)
  \cdot \left( \beta_{n+1} - \beta_{n} \nabla f (x_n) \right) v_N 
  \middle|
  x_N
  } 
\end{split}\end{equation*}
Since $v_N$ was chosen arbitrarily, we obtain the formula in the lemma.
\end{proof}

\subsection{Divergence formula} \label{s:divnstep}

We first derive an expanded divergence formula for the score, then a recursive formula.

\begin{theorem} [N-step divergence formula for score] \label{t:divnstep}
\begin{equation*} \begin{split}
  \nabla \log h_N (x_N) 
  = \E{ g_b^{*-N} \nabla \log h_{0}(x_0) 
  - \sum_{n=0}^{N-1} g_b^{*n-N} \div g_{b_n*}(x_{n}) \middle | x_N }.
\end{split} \end{equation*}
\end{theorem}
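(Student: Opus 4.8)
The plan is to prove the formula by induction on the number of steps $N$, using the one-step divergence formula \Cref{t:div1step} both as the base case and as the engine of the inductive step. For $N=1$ the claimed identity reduces exactly to \Cref{t:div1step}: the sum contains only the term $n=0$, and both $g_b^{*-N}=g_b^{*-1}$ and $g_b^{*\,0-1}=g_b^{*-1}$ coincide with the one-step operator $g_{b_0}^{*-1}$.

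For the inductive step I would regard $h_{N-1}$ as the initial density of a single step carrying $x_{N-1}$ to $x_N$, and apply \Cref{t:div1step} to that last step:
\[
  \nabla \log h_N(x_N) = \E{ g_{b_{N-1}}^{*-1}\left(\nabla\log h_{N-1}(x_{N-1}) - \div g_{b_{N-1}*}(x_{N-1})\right) \middle| x_N }.
\]
Then I would substitute the inductive hypothesis for $\nabla\log h_{N-1}(x_{N-1})$. Composing the new factor $g_{b_{N-1}}^{*-1}$ on the left of the operators already present promotes $g_b^{*-(N-1)}$ to $g_b^{*-N}$ on the $\nabla\log h_0$ term, promotes each old $g_b^{*\,n-(N-1)}$ to $g_b^{*\,n-N}$ on the old divergence terms, and contributes the fresh $n=N-1$ divergence term $g_b^{*-1}\div g_{b_{N-1}*}(x_{N-1})$, which is precisely the missing summand. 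The composition bookkeeping here is routine linear algebra: since $(g_{b_n*}^{-1})^T = g_{b_n}^{*-1}$, the inverse pullbacks compose contravariantly as $g_b^{*-N}=g_{b_{N-1}}^{*-1}\cdots g_{b_0}^{*-1}$, so prepending one factor shifts every exponent down by one.

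The hard part will be justifying that the nested conditional expectations collapse into a single conditioning on $x_N$. Inserting the inner expectation $\E{Z\middle|x_{N-1}}$ inside the outer $\E{\,\cdot\middle|x_N}$, I cannot naively invoke the tower property, since $\sigma(x_{N-1})$ is not contained in $\sigma(x_N)$. The fix is the Markov property of \Cref{e:discreteEq}: conditionally on $x_{N-1}$, the past $(x_0,\ldots,x_{N-2})$ together with the increments $b_0,\ldots,b_{N-2}$ are independent of $x_N$, so for any integrand $Z$ measurable with respect to the first $N-1$ steps one has $\E{Z\middle|x_{N-1}}=\E{Z\middle|x_{N-1},x_N}$. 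Because the factor $g_{b_{N-1}}^{*-1}$ is $\sigma(x_{N-1},x_N)$-measurable, I can pull it inside, apply this Markov identity, and then use the genuine tower property $\E{\E{Z\middle|x_{N-1},x_N}\middle|x_N}=\E{Z\middle|x_N}$ to land on a single conditional expectation given $x_N$.

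An alternative I would keep in reserve, avoiding nested conditioning entirely, is to differentiate directly. Expressing $h_N(x_N)$ as an $N$-fold integral over $b_0,\ldots,b_{N-1}$ with Jacobian weights $\prod_n |g_{b_n*}(x_n)|^{-1}$, the $N$-step analogue of \Cref{e:h1_b}, I would perturb $x_N\mapsto x_N+\gamma v_N$ at fixed increments, let the induced perturbation propagate backward as $v_n = g_{b_n*}^{-1}v_{n+1}$, and collect the logarithmic derivative of the integrand as $\nabla\log h_0(x_0)\cdot v_0 - \sum_n (\div g_{b_n*})(x_n)\cdot v_n$. Transposing the accumulated Jacobians onto the covectors yields the operators $g_b^{*-N}$ and $g_b^{*\,n-N}$ directly, and since $v_N$ is arbitrary the stated covector identity follows. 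This route trades the probabilistic bookkeeping for care in tracking the backward propagation of $v_N$ and the triangular structure of the change of variables.
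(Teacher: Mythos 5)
Your proof is correct, but it takes a route the paper deliberately avoids. The paper's own proof never nests conditional expectations: it recurses on the \emph{unnormalized} derivative $\nabla h_n$ using the integral identity \Cref{e:deng}, keeps everything as explicit multi-variable integrals against $h_0(x_0)p(x_{0\sim N})$, and only divides by $h_N$ and invokes conditional expectation once, at the very end (the paper even remarks that the conditional expectation notation ``can be shorter but also a bit confusing''). You instead run induction directly on the score $\nabla \log h_n$ in conditional-expectation form, which forces you to confront exactly that confusion: the nesting of an expectation conditioned on $x_{N-1}$ inside one conditioned on $x_N$. Your resolution is the right one and is carried out correctly: conditional on $x_{N-1}$, the past $\sigma(x_0,\dots,x_{N-2})$ is independent of $x_N$ (the Markov property of \Cref{e:discreteEq}, valid because the $b_n$ are i.i.d.\ and $b_{N-1}$ is independent of the past), the factors $g_{b_{N-1}}^{*-1}$ and $\div g_{b_{N-1}*}(x_{N-1})$ are $\sigma(x_{N-1},x_N)$-measurable and so can be pulled inside, and then the genuine tower property applies since $\sigma(x_N)\subset\sigma(x_{N-1},x_N)$. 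Your composition bookkeeping also matches the paper's convention $g_b^{*-N}=g_{b_{N-1}}^{*-1}\cdots g_{b_0}^{*-1}$. What each approach buys: the paper's integral recursion makes the collapse trivial (one integral formula substituted into another) at the cost of heavier notation, while your induction is structurally cleaner but rests entirely on the Markov/measurability argument, which is the one step a careless reader could botch. Your reserve plan, differentiating the $N$-fold integral over $b_0,\dots,b_{N-1}$ with weights $\prod_n \left| g_{b_n*}(x_n) \right|^{-1}$ as the $N$-step analogue of \Cref{e:h1_b}, is in fact the closer cousin of the paper's proof, differing mainly in whether one integrates over the $b$ variables or the $x$ variables.
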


\begin{proof}
We use multi-variable calculus notation; the conditional expectation notation can be shorter but also a bit confusing.
Apply \Cref{e:deng} on step $N$,
\begin{equation*} \begin{split}
  \nabla h_N (x_N) 
  = \int g_{b_{N-1}}^{*-1} \left(\nabla h_{N-1} 
  - (\div g_{b_{N-1}*}) h_{N-1} \right)(x_{N-1}) p(x_{N-1},x_{N}) dx_{N-1}.
\end{split} \end{equation*}
Recursively apply this result on step $N-1$, we get
\begin{equation*} \begin{split}
  \nabla h_N (x_N) 
  = 
  \iint g_b^{*-2} \nabla h_{N-2} (x_{N-2}) p(x_{N-2},x_{N-1}) p(x_{N-1},x_{N})dx_{N-2} dx_{N-1}
  \\
  - \iint g_b^{*-2} \div g_{b_{N-2}*}(x_{N-2}) h_{N-2} (x_{N-2}) p(x_{N-2},x_{N-1}) p(x_{N-1},x_{N}) dx_{N-2} dx_{N-1}
  \\
  - \int g_{b_{N-1}}^{*-1} \div g_{b_{N-1}*}(x_{N-1}) h_{N-1} (x_{N-1}) p(x_{N-1},x_{N}) dx_{N-1}
  ,
\end{split} \end{equation*}
where $g_b^{*-2} = g_{b_{N-1}}^{*-1} g_{b_{N-2}}^{*-1}$ is the consecutive multiplication of $g_b^{*}$ evaluated at suitable places.
Keep doing this, we get
\begin{equation*} \begin{split}
  \nabla h_N (x_N) 
  = 
  \int g_b^{*-N} \nabla h_{0}(x_0) p(x_{0},x_{1})\ldots p(x_{N-1},x_{N})dx_{0} \ldots dx_{N-1}
  \\
  - \sum_{n=0}^{N-1} \int g_b^{*n-N} \div g_{b_n*}(x_{n}) h_{n} (x_{n}) p(x_{n},x_{n+1})\ldots p(x_{N-1},x_{N}) dx_n \ldots dx_{N-1}
  .
\end{split} \end{equation*}
Recall that $h_{n} = \int h_{0}(x_0) p(x_{0\sim n}) dx_{0\sim n-1}$, so
\begin{equation*} \begin{split}
  \nabla h_N (x_N) 
  = 
  \int \left( g_b^{*-N} \nabla \log h_{0}(x_0) 
  - \sum_{n=0}^{N-1} g_b^{*n-N} \div g_{b_n*}(x_{n}) \right)
  h_0(x_0) p(x_{0\sim N})dx_{0 \sim N-1}
  .
\end{split} \end{equation*}
Divide by $h_n$ and use the conditional expectation notation to get the lemma.
\end{proof}

\begin{lemma} [N-step recursive divergence formula for score] \label{t:RecurDivnstep}
Let the covector process $\nu_n$ be defined by 
\begin{equation*} \begin{split}
  \nu_0 = \nabla \log h_0 (x_0)
  ,\quad \textnormal{} \quad 
  \nu_{n+1} = g_{b_n}^{*-1} (\nu_n - \div g_{b_n*}(x_{n})).
\end{split} \end{equation*}
Then the score has the expression
\begin{equation*} \begin{split}
  \nabla \log h_N (x_N) 
  = \E{ \nu_N \middle| x_N }.
\end{split} \end{equation*}
\end{lemma}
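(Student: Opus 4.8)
The plan is to observe that the recursively defined covector process $\nu_n$ is nothing but the step-by-step unfolding of the expanded sum already established in \Cref{t:divnstep}; thus the lemma follows from a short induction together with a single invocation of that theorem, and requires no new analytic input.

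First I would prove by induction on $n$ that
\begin{equation*}\begin{split}
  \nu_n = g_b^{*-n} \nabla \log h_0 (x_0) - \sum_{m=0}^{n-1} g_b^{*m-n} \div g_{b_m*}(x_m),
\end{split}\end{equation*}
where the composed pullbacks follow the convention fixed in the proof of \Cref{t:divnstep}, namely $g_b^{*-n} = g_{b_{n-1}}^{*-1} \cdots g_{b_0}^{*-1}$ and $g_b^{*m-n} = g_{b_{n-1}}^{*-1} \cdots g_{b_m}^{*-1}$. The base case $n=0$ is immediate, since $\nu_0 = \nabla \log h_0(x_0)$ and the sum is empty. For the inductive step I would feed the hypothesis into the recursion $\nu_{n+1} = g_{b_n}^{*-1}(\nu_n - \div g_{b_n*}(x_n))$: applying $g_{b_n}^{*-1}$ prepends one factor to every accumulated product, promoting $g_b^{*-n}$ to $g_b^{*-(n+1)}$ and each $g_b^{*m-n}$ to $g_b^{*m-(n+1)}$, while the newly subtracted term $g_{b_n}^{*-1} \div g_{b_n*}(x_n)$ is precisely the missing $m=n$ summand because $g_{b_n}^{*-1} = g_b^{*n-(n+1)}$. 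This reproduces the closed form at level $n+1$ and closes the induction.

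Evaluating the closed form at $n=N$ shows that $\nu_N$ equals, termwise, the random covector appearing inside the conditional expectation of \Cref{t:divnstep}. Taking the conditional expectation given $x_N$ of both sides then gives $\nabla \log h_N(x_N) = \E{\nu_N \middle| x_N}$, which is the assertion. The only point demanding care---and the main thing to verify---is the index bookkeeping for the composed pullback operators: one must check that each fresh factor $g_{b_n}^{*-1}$ attaches on the leftmost side of every prior product, and that the evaluation points stay consistent, with $\nu_n$ a covector at $x_n$ and $g_{b_n}^{*-1}$ carrying it to a covector at $x_{n+1}$. Because the pullbacks compose functorially, once the convention of \Cref{t:divnstep} is adopted verbatim this reduction is entirely mechanical.
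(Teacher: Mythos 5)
Your proposal is correct and is exactly the paper's argument: the paper's proof also just verifies recursively that $\nu_N$ equals the expanded expression of \Cref{t:divnstep} and then invokes that theorem. You have merely spelled out the induction and the pullback-composition bookkeeping that the paper leaves implicit.
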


\begin{proof}
  Check recursively that 
  $
  \nu_N
  = g_b^{*-N} \nabla \log h_{0}(x_0) 
  - \sum_{n=0}^{N-1} g_b^{*n-N} \div g_{b*}(x_{n}).
  $
\end{proof}

\subsection{Divergence-kernel formula} \label{s:divkernstep}

Even some very recent paper on score functions can not solve the case with multiplicative noise \cite{MG25score}.
We solve it by combining the kernel with the divergence method.
Then we give an example of a particular schedule, using which we can eliminate the $\nabla h_0$ in our formula.

Here the schedule $\alpha$ is similar but different to $\beta$.
\Cref{t:kernstep} uses $\beta$, which directly modulates the norm of vector process $v$.
In contrast, here $\alpha$ modulates the growth rate of the covector process $\nu$, it is deployed in the evolution equation of $\nu$.
So we require a bit different predictability for $\alpha$ and $\beta$.

\begin{lemma} [N-step backward divergence-kernel formula for score] \label{t:divkernstep}
For any backward adapted scalar process $\{ \alpha_n \}_{ n=0 }^N$, any $v_N$, let $\{ v_n \}_{ n=0 }^{N-1}$ be the backward adapted vector process
\begin{equation*}\begin{split}
v_n = (1-\alpha_{n+1}) g_{b_n*}^{-1} v_{n+1},
\end{split}\end{equation*}
with the convention $\log k (b_{-1}) :=0$, $\div g_{b_N*} :=0$, then
\begin{equation*}\begin{split}
  \nabla_{v_N} \log h_N (x_N) 
  = \E{
  \sum_{n=0}^N  \left(\frac { \alpha_n \nabla   \log k (b_{n-1}) } {\sigma(x_{n-1})} 
  -  \div g_{b_n*} (x_n) \right) \cdot v_n
  + (\nabla \log h_0(x_0)) \cdot v_0 \middle| x_N}.
\end{split}\end{equation*}
\end{lemma}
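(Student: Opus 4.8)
The plan is to peel off one transition at a time from the top, applying the one-step divergence-kernel identity \Cref{e:yuan} at each stage, organized as a downward induction on an index $m$ running from $N$ to $0$. Concretely, I would prove that for every $0\le m \le N$,
\begin{equation*}\begin{split}
  \nabla_{v_N}\log h_N(x_N) = \E{ \sum_{n=m+1}^N \frac{\alpha_n \nabla\log k(b_{n-1})}{\sigma(x_{n-1})}\cdot v_n - \sum_{n=m}^{N-1} (\div g_{b_n*})(x_n)\cdot v_n + \nabla\log h_m(x_m)\cdot v_m \middle| x_N }.
\end{split}\end{equation*}
The base case $m=N$ is the tautology $\nabla_{v_N}\log h_N(x_N)=\nabla\log h_N(x_N)\cdot v_N$, since both sums are empty. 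The case $m=0$, combined with the conventions $\log k(b_{-1}):=0$ and $\div g_{b_N*}:=0$, is exactly the claimed formula: the missing $n=0$ kernel term and $n=N$ divergence term vanish, so the two partial sums extend to $\sum_{n=0}^N$.

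For the inductive step I would take the boundary term $\nabla\log h_m(x_m)\cdot v_m = \nabla_{v_m}\log h_m(x_m)$ and apply \Cref{e:yuan} to the one-step transition $x_{m-1}\mapsto x_m$, playing the roles $0\mapsto m-1$, $1\mapsto m$, with $h_{m-1}$ as the initial density and $\alpha_m$ as the schedule. Because $\alpha_m$ is a \emph{scalar}, the factor $(1-\alpha_m)$ commutes with $g_{b_{m-1}*}^{-1}$, so the vector $g_{b_{m-1}*}^{-1}(1-\alpha_m)v_m$ appearing in \Cref{e:yuan} is precisely $v_{m-1}$ from the backward recursion. The identity then produces exactly three pieces: the kernel term $\frac{\alpha_m\nabla\log k(b_{m-1})}{\sigma(x_{m-1})}\cdot v_m$ (which extends the kernel sum down to $n=m$), the divergence term $-(\div g_{b_{m-1}*})(x_{m-1})\cdot v_{m-1}$ (which extends the divergence sum down to $n=m-1$), and the new boundary term $\nabla\log h_{m-1}(x_{m-1})\cdot v_{m-1}$. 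This bookkeeping is the reason divergence and kernel contributions carry shifted indices, and the main thing to check is that nothing is double-counted or dropped at the two ends.

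The hard part will be the handling of the conditional expectations, not the algebra. \Cref{e:yuan} conditions on $x_m$ alone and treats $\alpha_m, v_m$ as fixed, whereas in the multistep setting both are only \emph{backward} adapted, i.e.\ $\sigma(x_m,\dots,x_N)$-measurable, so they genuinely depend on the future. I would resolve this by first upgrading the conditioning: since the integrand's $x_{m-1}$-dependence enters only through functions of $(x_{m-1},b_{m-1})$, and since the Markov property gives $x_{m-1}\perp(x_{m+1},\dots,x_N)\mid x_m$, the conditional law used in \Cref{e:yuan} is unchanged if we condition on the whole backward $\sigma$-algebra $\sigma(x_m,\dots,x_N)$ rather than on $x_m$; the $\sigma(x_m,\dots,x_N)$-measurable quantities $\alpha_m,v_m$ may then be frozen and pulled through. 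Once the inner expectation is conditioned on $\sigma(x_m,\dots,x_N)\supseteq\sigma(x_N)$, the tower property legitimately collapses the nested conditional expectation into a single $\E{\,\cdot\mid x_N}$, which is what closes the induction. (An alternative that sidesteps the tower subtlety entirely is to differentiate the full path integral for $h_N$ directly, splitting each step's perturbation into an $\alpha_n$-weighted kernel part and a $(1-\alpha_n)$-weighted change-of-variables part, as in the proofs of \Cref{t:divnstep,t:kernstep}; but I expect the recursive route above to be the most transparent, provided the Markov upgrade is stated carefully.)
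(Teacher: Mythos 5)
Your proposal is correct and takes essentially the same route as the paper: the paper likewise proves the lemma by recursively applying the one-step divergence-kernel identity behind \Cref{e:yuan}, peeling off one transition at a time from step $N$ down to step $0$, and invoking backward adaptedness of $\alpha_n$ (hence of $v_n$) at exactly the point you flag. The only real difference is bookkeeping: the paper keeps everything as unnormalized nested integrals (expanding $\nabla_{v_m} h_m$ against the remaining transition kernels $p(x_{m\sim N})$) and divides by $h_N(x_N)$ only once at the end, which sidesteps the Markov-property/tower-property upgrade of conditional expectations that your conditional-expectation invariant requires.
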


\begin{remark*}
We can choose $\alpha$ to be a matrix process, using the exact version of \Cref{t:divker1step}.
Here we use scalar $\alpha$ only for simplicity.
\end{remark*}

\begin{proof}
First, by \Cref{e:yuan}, for any scalar $\alpha_N$ and vector $v_N$ not dependent on $x_{N-1}$, 
recall that
$ b_{N-1} = \frac{x_N-f(x_{N-1})}{\sigma(x_{N-1})} $, 
$ v_{N-1} 
%= (1-\alpha_N) u_{N-1}
=g_{b*}^{-1} (1-\alpha_N)   v_N $,
then
\begin{equation*} \begin{split}
  \nabla_{v_N} h_N (x_N) 
  = \nabla_{\alpha_N v_N} h_N (x_N) 
  + \nabla_{(1-\alpha_N)v_N} h_N (x_N) 
  \\
  = \int \frac { \nabla_{\alpha_N  v_N} \log k (b_{N-1}) } {\sigma(x_{N-1})} 
  h_{N-1} (x_{N-1}) p(x_{N-1},x_{N}) dx_{N-1}
  \\
  +\int \left(\nabla_{v_{N-1}} h_{N-1} - \div g_{b_{N-1}*} \cdot v_{N-1}  h_{N-1} \right)(x_{N-1})
  p(x_{N-1},x_{N}) dx_{N-1},
\end{split} \end{equation*}
where $h_{N-1} (x_{N-1}) = \int h_{0} (x_0) p(x_{0\sim N-1}) dx_{0\sim N-2} $.

Recall that $v_{N-1}$ depends on $\alpha_N$, $v_N$, $x_N$, and $x_{N-1}$;
so $v_{N-1}$ and $\alpha_{N-1}$ do not dependent on $x_{N-2}$.
(
Note that here we used the fact that $\alpha_N$ does not depend on $x_{N-2}$, whereas the previous step uses that $\alpha_N$ does not depend on $x_{N-1}$, so we do require $\alpha_n$ to be backward adapted.
)
Also note that $ v_{N-2} = g_{b_{N-2}*}^{-1} (1-\alpha_{N-1}) v_{N-1} $, we get
\begin{equation*}\begin{split}
\nabla_{v_{N-1}} h_{N-1} (x_{N-1}) 
= 
\int \frac { \nabla_{\alpha_{N-1} v_{N-1}} \log k (b_{N-2}) } {\sigma(x_{N-2})} 
 h_{N-2} (x_{N-2}) p(x_{N-2},x_{N-1}) dx_{N-2}
\\
+\int \left(\nabla_{ v_{N-2}} h_{N-2} - \div g_{b_{N-2}*} \cdot v_{N-2} h_{N-2} \right)(x_{N-2})
  p(x_{N-2},x_{N-1}) dx_{N-2}.
\end{split}\end{equation*}

Keep doing this, we get
\begin{equation*} \begin{split}
  \nabla_{v_N} h_N (x_N) 
  = \sum_{n=1}^N  \int \frac { \nabla_{ \alpha_n v_n} \log k (b_{n-1}) } {\sigma(x_{n-1})} 
  h_{n-1} (x_{n-1}) p(x_{n-1\sim N}) dx_{n-1\sim N-1}
  \\ 
  - \sum_{n=0}^{N-1}  \int \div g_{b_n*} \cdot v_{n}  h_{n} (x_n)
  p(x_{n\sim N}) dx_{n\sim N-1}
  + \int \nabla_{v_0} h_0(x_0) p(x_{0\sim N}) dx_{0\sim N-1}.
\end{split} \end{equation*}
Then divide by $h_N (x_N)$ to get the lemma.
\end{proof}

\begin{theorem} [N-step forward divergence-kernel formula for score] \label{t:divkernstep_covec}
For any backward adapted process $\{ \alpha_n \}_{ n=0 }^N$,
let $\{ \nu_n \}_{ n=0 }^N$ be the forward covector process
\begin{equation*}\begin{split}
\nu_0 = \nabla \log h_0(x_0),
\quad \textnormal{} \quad
\nu_{n+1} = (1-\alpha_{n+1}) g_{b_n}^{*-1}
\left( \nu_{n} - \div g_{b_{n}*} (x_{n}) \right)
+ \frac {\alpha_{n+1} \nabla \log k (b_{n})} {\sigma(x_{n})} .
\end{split}\end{equation*}
Then
\begin{equation*}\begin{split}
  \nabla \log h_N (x_N) 
  = \E{\nu_N  \middle| x_N}.
\end{split}\end{equation*}
\end{theorem}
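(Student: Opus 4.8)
The plan is to reduce \Cref{t:divkernstep_covec} to the backward formula of \Cref{t:divkernstep} by exhibiting a pathwise duality between the forward covector process $\{\nu_n\}$ defined here and the backward vector process $\{v_n\}$ of that lemma, in exactly the way \Cref{t:RecurDivnstep} repackages the expanded sum of \Cref{t:divnstep}. Concretely, I would fix an arbitrary constant test vector $v_N$, let $\{v_n\}_{n=0}^{N-1}$ be the backward vector process $v_n = (1-\alpha_{n+1}) g_{b_n*}^{-1} v_{n+1}$ from \Cref{t:divkernstep}, and aim to establish the deterministic (realization-by-realization) identity
\begin{equation*}\begin{split}
  \nu_N \cdot v_N
  = \nu_0 \cdot v_0
  + \sum_{n=0}^N \left( \frac{\alpha_n \nabla \log k(b_{n-1})}{\sigma(x_{n-1})} - \div g_{b_n*}(x_n) \right) \cdot v_n.
\end{split}\end{equation*}
Since $\nu_0 = \nabla \log h_0(x_0)$, the right-hand side is exactly the integrand appearing inside the conditional expectation of \Cref{t:divkernstep}, so proving this identity is all that remains.

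The core of the argument is a discrete summation-by-parts on the scalar sequence $\nu_n \cdot v_n$, evaluated by using the forward recursion for $\nu$ and the backward recursion for $v$ simultaneously. The crucial algebraic fact is the adjointness of the pullback and pushforward operators, $(g_{b_n}^{*-1} w) \cdot u = w \cdot (g_{b_n*}^{-1} u)$, which lets me move $g_{b_n}^{*-1}$ off $\nu_n - \div g_{b_n*}(x_n)$ and onto $v_{n+1}$; the surviving factor $(1-\alpha_{n+1}) g_{b_n*}^{-1} v_{n+1}$ then collapses to $v_n$ directly, avoiding any division by $1-\alpha_{n+1}$. This yields the one-step increment
\begin{equation*}\begin{split}
  \nu_{n+1} \cdot v_{n+1} - \nu_n \cdot v_n
  = - \div g_{b_n*}(x_n) \cdot v_n
  + \frac{\alpha_{n+1} \nabla \log k(b_n)}{\sigma(x_n)} \cdot v_{n+1}.
\end{split}\end{equation*}
Telescoping over $n = 0, \ldots, N-1$, reindexing the kernel term by $m = n+1$, and invoking the boundary conventions $\log k(b_{-1}) := 0$ and $\div g_{b_N*} := 0$ to absorb the endpoint contributions, I recover the target identity with a single clean sum over $n = 0, \ldots, N$.

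Finally I would take $\E{\,\cdot \mid x_N}$ on both sides of the pathwise identity; because $v_N$ is a fixed constant it factors out of the expectation, and matching against \Cref{t:divkernstep} gives $\E{\nu_N \mid x_N} \cdot v_N = \nabla_{v_N} \log h_N(x_N)$. Since $v_N$ was arbitrary, the covector identity $\nabla \log h_N(x_N) = \E{\nu_N \mid x_N}$ follows. The step I expect to be the main obstacle is not any single computation but the bookkeeping: getting the pullback-pushforward adjointness oriented correctly and handling the index shift together with the two endpoint conventions so that the forward-$\nu$ and backward-$v$ recursions, which share the same factors $\alpha_{n+1}$, pair up consistently. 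The delicate measurability point concerning where $\alpha_{n+1}$ is allowed to depend, which underlies \Cref{t:divkernstep}, is inherited and need not be revisited here, since the telescoping identity holds for any prescribed sequence $\{\alpha_n\}$ pathwise.
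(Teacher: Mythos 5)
Your proposal is correct and is essentially the paper's own proof carried out in detail: the paper's proof is the one-line instruction to use the adjoint method and check terms against \Cref{t:divkernstep}, and your pathwise pairing of the forward covector process $\nu_n$ with the backward vector process $v_n$, via the adjointness $(g_{b_n}^{*-1}w)\cdot u = w\cdot(g_{b_n*}^{-1}u)$ and telescoping with the two endpoint conventions, is exactly that adjoint-method verification. The increment identity, the reindexing, and the final step using constancy and arbitrariness of $v_N$ all check out.
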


\begin{proof}
  This is the same as deriving the adjoint method for parameter derivatives, such as in \cite{Ni_asl}.
  To prove the theorem, expand everything and check terms match \Cref{t:divkernstep}.
\end{proof}

It is desirable to get rid of the $\nabla h_0$ in the expression, which is often unknown in practice.
This can be achieved by some special schedules.

\begin{theorem} [N-step forward divergence-kernel formula for score without $\nabla h_0$] \label{t:divkernstep_covec_noh0}
For any backward adapted process $\{ \alpha_n \}_{ n=0 }^N$,
let $\{ \nu_n \}_{ n=0 }^N$ be the forward covector process
\begin{equation*}\begin{split}
\nu_0 = 0,
\quad \textnormal{} \quad
\nu_{n+1} 
= g_{b_n}^{*-1}
\left( \nu_{n} - \frac nN \div g_{b_{n}*} (x_{n}) \right)
+ \frac {\nabla \log k (b_{n})} {N \sigma(x_{n})} .
\end{split}\end{equation*}
Then
\begin{equation*}\begin{split}
  \nabla \log h_N (x_N) 
  = \E{\nu_N  \middle| x_N}.
\end{split}\end{equation*}
\end{theorem}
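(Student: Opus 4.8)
The plan is to reduce the claim to the one-step formulas already proved, by first unrolling the recursion into a single explicit sum and then recognizing that sum as the telescoping of a suitably weighted, forward-transported score. First I would unroll the recursion for $\nu_n$ exactly as in the proof of \Cref{t:RecurDivnstep}: since $\nu_0=0$, an easy induction gives
\[
  \nu_N = \sum_{n=0}^{N-1}\left( -\frac nN\, g_b^{*n-N}\,\div g_{b_n*}(x_n) + \frac 1N\, g_b^{*n+1-N}\,\frac{\nabla\log k(b_n)}{\sigma(x_n)} \right),
\]
where $g_b^{*m-N}$ denotes the composition of inverse pullback operators transporting a covector from step $m$ to step $N$, in the notation of \Cref{t:divnstep}. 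It therefore suffices to show that the conditional expectation of this explicit sum equals $\nabla\log h_N(x_N)$.

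The heart of the argument is a telescoping identity for the weighted transported score $\psi_m := \tfrac mN\, g_b^{*m-N}\,\nabla\log h_m(x_m)$. The two endpoints are engineered by the weight $m/N$: at $m=N$ we get $\psi_N=\nabla\log h_N(x_N)$, the target, while at $m=0$ the weight annihilates everything, so $\psi_0=0$ — and this is exactly the mechanism that removes $\nabla\log h_0$ from the formula. Writing $\nabla\log h_N(x_N)=\psi_N-\psi_0=\sum_{m=0}^{N-1}(\psi_{m+1}-\psi_m)$, using $g_b^{*m-N}=g_b^{*m+1-N}g_{b_m}^{*-1}$, and splitting $\tfrac{m+1}N=\tfrac mN+\tfrac 1N$, each increment becomes
\[
  \psi_{m+1}-\psi_m = g_b^{*m+1-N}\left[ \frac mN\left(\nabla\log h_{m+1}-g_{b_m}^{*-1}\nabla\log h_m\right) + \frac 1N\,\nabla\log h_{m+1}\right].
\]
To the first bracket I would apply the one-step divergence formula \Cref{t:div1step} (with initial density $h_m$), which replaces $\nabla\log h_{m+1}-g_{b_m}^{*-1}\nabla\log h_m$ by $-g_{b_m}^{*-1}\div g_{b_m*}(x_m)$; to the second bracket I would apply the one-step kernel formula \Cref{t:ker1step} (again with $h_m$ as initial density), which replaces $\nabla\log h_{m+1}$ by $\sigma^{-1}(x_m)\nabla\log k(b_m)$. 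After transporting by $g_b^{*m+1-N}$ and summing over $m$, these two families of terms reproduce precisely the explicit sum for $\nu_N$ above, yielding $\E{\nu_N \middle| x_N}=\nabla\log h_N(x_N)$.

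The main obstacle is making the substitution of the one-step identities rigorous inside the nested conditioning. Both \Cref{t:div1step} and \Cref{t:ker1step} are equalities only under the conditional expectation $\E{\,\cdot\, \middle| x_{m+1}}$, whereas the transport factor $g_b^{*m+1-N}$ and the outer conditioning on $x_N$ involve the future increments $b_{m+1},\dots,b_{N-1}$, which are correlated with $x_{m+1}$ once $x_N$ is fixed. I therefore expect the telescoping above to serve as the guiding computation, but the actual proof should be carried out in the multi-variable integral form used in the proof of \Cref{t:divkernstep}: write every term as an integral against $h_0(x_0)\,p(x_{0\sim N})\,dx_{0\sim N-1}$, perform the weighted perturbation and change of variables one step at a time, and collect the divergence and kernel contributions. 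In that formulation the Markov structure is automatic, and the weight $m/N$ appears simply as the coefficient carried forward through the recursion, so the boundary term at $m=0$ vanishes without ever differentiating $h_0$.
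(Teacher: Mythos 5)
Your proposal is correct, but it takes a genuinely different route from the paper. The paper proves this theorem as a short specialization of the backward divergence-kernel formula, \Cref{t:divkernstep}: it chooses the schedule $\alpha_n = 1/n$, verifies that the backward recursion $v_n = (1-\alpha_{n+1})g_{b_n*}^{-1}v_{n+1}$ is then solved by $v_n = \frac{n}{N}g_{b*}^{n-N}v_N$ (so that $v_0=0$, which is what removes $\nabla\log h_0$), and unrolls the resulting sum --- the same sum as your unrolled $\nu_N$. You instead bypass \Cref{t:divkernstep} altogether: your telescoping of the weighted transported score $\psi_m = \frac{m}{N}g_b^{*m-N}\nabla\log h_m(x_m)$, with the split $\frac{m+1}{N} = \frac mN + \frac 1N$ feeding the first piece into \Cref{t:div1step} and the second into \Cref{t:ker1step}, re-derives the result directly from the one-step lemmas. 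What the paper's route buys is economy and generality: the integral/conditioning manipulations are done once, inside the proof of \Cref{t:divkernstep}, and the theorem appears as one member of a family of schedules (the paper notes other schedules also eliminate $\nabla\log h_0$). What your route buys is an explanation of where the coefficients come from: the weights $n/N$ on the divergence and $1/N$ on the kernel are exactly the two pieces of the increment of a linearly weighted score, and the vanishing of $\psi_0$ makes the elimination of $\nabla\log h_0$ transparent. Note that your route invokes the intermediate scores $\nabla\log h_m$, $1\le m\le N-1$, which the paper's proof never references; this is harmless because the $C^1$ kernel makes every $h_m$ with $m\ge 1$ differentiable, and your $m=0$ increment carries weight zero on the divergence part, so $h_0$ is indeed never differentiated. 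Finally, the obstacle you flag is real but closable exactly as you say: the substitution of the one-step identities under the conditioning on $x_N$ is justified because $g_b^{*m+1-N}$ is $\sigma(x_{m+1},\ldots,x_N)$-measurable and, given $x_{m+1}$, the pair $(x_m,b_m)$ is conditionally independent of $(x_{m+2},\ldots,x_N)$; equivalently, in the integral form against $h_0(x_0)p(x_{0\sim N})dx_{0\sim N-1}$ this is just Fubini plus the one-step identities, which is precisely the style in which the paper writes its own proofs.
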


\begin{proof}
%Define the \textit{un}tempered vector process
%\begin{equation*}\begin{split}
  %u_{n-1} = g_{b_{n-1}*}^{-1} u_n,
  %\quad \textnormal{} \quad 
  %u_N = v_N.
%\end{split}\end{equation*}
In \Cref{t:divkernstep}, we want to have
\begin{equation*}\begin{split}
  v_n = \frac nN g_{b*}^{n-N} v_N.
\end{split}\end{equation*}
This can be achieved by 
\begin{equation*}\begin{split}
  \alpha_n = \frac 1n.
\end{split}\end{equation*}
Indeed, we can check that for this $\alpha$,
\begin{equation*}\begin{split}
  (1-\alpha_{n+1}) g_{b_n*}^{-1} v_{n+1}
  = \frac n{n+1} g_{b_n*}^{-1} \frac{n+1}N g_{b*}^{n+1-N} v_N
  = \frac nN g_{b_n*}^{n-N} v_{N}
  = v_n
\end{split}\end{equation*}
Hence, $v_0=0$, and we have
\begin{equation*}\begin{split}
  \nabla_{v_N} \log h_N (x_N) 
  = \E{
  \sum_{n=0}^N  \left(\frac { \alpha_n \nabla   \log k (b_{n-1}) } {\sigma(x_{n-1})} 
  - \div g_{b_n*} (x_n) \right) \cdot v_n
  \middle| x_N}
  \\
  = \E{
  \sum_{n=0}^N  \left(\frac { \nabla \log k (b_{n-1}) } {N \sigma(x_{n-1})} 
  - \frac nN \div g_{b_n*} (x_n) \right) \cdot g_{b*}^{n-N} v_N
  \middle| x_N}
\end{split}\end{equation*}
Move all operations away from $v_N$, we get
\begin{equation*}\begin{split}
  \nabla_{v_N} \log h_N (x_N) 
  = \E{
  \sum_{n=0}^N g_{b}^{*n-N} \left(
  \frac { \nabla \log k (b_{n-1}) } {N \sigma(x_{n-1})} 
  - \frac nN \div g_{b_n*} (x_n) 
  \right) \cdot v_N
  \middle| x_N}.
\end{split}\end{equation*}
Then remove $v_N$ and check this matches the expression in the statement.
\end{proof}

\section{Continuous-time results} \label{s:cts}

We \textit{formally} pass the discrete-time results to the continuous-time limit SDE,
\begin{equation} \begin{split} \label{e:sde}
dx_t = F(x_t) dt + \sigma(x_t)dB_t,
\end{split} \end{equation}
where $B$ denotes a standard Brownian motion.
All SDEs in this paper are in the Ito sense, so the discretized version takes the form of \Cref{e:discreteSDE}.
Denote the density of $x_t$ by $h_t$.
Let $\cF_t$ be the $\sigma$-algebra generated by $\{B\tau\}_{\tau\le t}$ and $X_0$.
%We take $\alpha_t$ to be a scalar process adapted to $\cF_t$.
%We also assume that $\alpha_t$ is integrable with respect to $dB_t$; for example, this can be achieved when $\alpha_t\in\mathcal{L}^2_T$ for a finite time span $[0,T]$.

Our derivation is performed on the time span divided into small segments of length $\Dt$.
Let $N$ be the total number of segments, so $N\Dt = T$.
Denote
\[ 
  \DB_n:= B_{n+1} - B_n.
\]
Denote $\alpha_n = \alpha_{n\Dt}$.
The discretized SDE is 
\begin{equation} \label{e:discreteSDE}
X_{n+1} - X_{n}
=F(X_n) \Dt + \sigma(X_n)\DB_n.
\end{equation}

This section first derives score formulas for time-discretized SDEs, then formally pass to the continuous-time limit.
We neglect high-order terms in the time-discretized result, which disappear in the continuous-time limit anyway.
Comparing the discrete-time notation in \Cref{s:discrete} with the time-discretized SDE in \Cref{e:discreteSDE}, we have
\begin{equation*}\begin{split}
  f(x) := x+ F(x)\Dt,
  \quad 
  b_n := \DB_n,
  \quad 
  k(b) := \left(2\pi \sqrt \Dt \right)^{-M/2} \exp{ -\frac {\ip{b,b}}{2\Dt}} 
  .
\end{split}\end{equation*}

\subsection{Kernel-differentiation formula} 
\label{s:kerCts}

\begin{theorem} [formal kernel score fomula for SDE in \Cref{e:sde}] \label{t:ker_sde}
Assume that $\beta_T=1$, $\beta_t$ has $C^1$ (differentiable in time) paths, whose time derivative is denoted as $\beta'$, then
\begin{equation*}\begin{split}
  \nabla \log h_T(x_T) 
  = 
  \E{
  \beta_0 \nabla \log h_0(x_0)
  + \frac 1 \sigma \int \beta_t  (\nabla F)^T (x_t) - \beta_t' dB_t
  \middle|
  x_T
  } 
\end{split}\end{equation*}
\end{theorem}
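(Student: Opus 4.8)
The plan is to obtain this formula as the formal continuous-time limit of the discrete $N$-step kernel formula in \Cref{t:kernstep}. Since the noise here is additive ($\sigma$ is spatially constant), \Cref{t:kernstep} applies directly to the time-discretization \Cref{e:discreteSDE}, and I would start from its conclusion
\begin{equation*}\begin{split}
  \nabla \log h_N(x_N)
  =
  \E{
  \beta_0 \nabla \log h_0(x_0)
  +
  \frac {1} {\sigma}
  \sum _{n=0}^{N-1}
  \left(\beta_{n+1} I - \beta_n \nabla f^T(x_n) \right)
  \nabla \log k \left(b_n\right)
  \middle|
  x_N
  },
\end{split}\end{equation*}
then substitute the dictionary $f(x) = x + F(x)\Dt$, $b_n = \DB_n$, and the Gaussian $k$. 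The $\beta_0 \nabla \log h_0$ term and the conditioning on the endpoint carry over verbatim, so the entire problem reduces to identifying the limit of the sum.

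The core computation is to expand the summand to leading order in $\Dt$. From the Gaussian kernel one reads off $\nabla \log k(b_n) = -\DB_n/\Dt$, and from the discretized drift $\nabla f^T(x_n) = I + \Dt\, \nabla F^T(x_n)$, so that
\begin{equation*}\begin{split}
  \beta_{n+1} I - \beta_n \nabla f^T(x_n)
  = (\beta_{n+1}-\beta_n) I - \Dt\, \beta_n \nabla F^T(x_n).
\end{split}\end{equation*}
Because $\beta_t$ has $C^1$ time paths, $\beta_{n+1}-\beta_n = \beta_t' \Dt + o(\Dt)$ with $t = n\Dt$, and the bracket becomes $\Dt\,(\beta_t' I - \beta_n \nabla F^T(x_n)) + o(\Dt)$. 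Multiplying by $\nabla \log k(b_n) = -\DB_n/\Dt$ cancels the factor $\Dt$ and gives
\begin{equation*}\begin{split}
  \left(\beta_{n+1} I - \beta_n \nabla f^T(x_n) \right)\nabla \log k(b_n)
  = \left( \beta_n \nabla F^T(x_n) - \beta_t' I \right)\DB_n + o(\sqrt{\Dt}).
\end{split}\end{equation*}
Dividing by the constant $\sigma$ and summing, I would recognize $\frac 1 \sigma \sum_n (\beta_n (\nabla F)^T(x_n) - \beta_t')\DB_n$ as a Riemann--Ito sum whose formal limit is $\frac 1 \sigma \int_0^T \beta_t (\nabla F)^T(x_t) - \beta_t'\, dB_t$, matching the stated integrand.

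The hard part, and the place where the $C^1$-in-time hypothesis on $\beta$ does the real work, is the cancellation inside $(\beta_{n+1}-\beta_n)\nabla \log k(b_n)$. Here $\nabla \log k(b_n) = -\DB_n/\Dt$ is of order $\Dt^{-1/2}$ and diverges, so only the matching factor $\beta_{n+1}-\beta_n = O(\Dt)$ supplied by the smoothness of $\beta$ keeps the term finite and produces the contribution $-\beta_t'\, dB_t$; a merely continuous $\beta$ would not suffice. The remaining formal step is to argue that the neglected $o(\sqrt{\Dt})$ increments sum to zero in the limit: each carries a mean-zero factor $\DB_n$ that is conditionally independent of its coefficient, so an Ito-isometry / martingale estimate controls the $L^2$ size of the error sum, while the main Riemann--Ito sum converges to the claimed stochastic integral (with the conditioning on $x_T$ accommodating the backward-adapted coefficients). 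As elsewhere in \Cref{s:cts}, all of this is carried out only formally.
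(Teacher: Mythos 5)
Your proposal is correct and follows essentially the same route as the paper's proof: apply \Cref{t:kernstep} to the time-discretized SDE, use $\nabla \log k(\DB_n) = -\DB_n/\Dt$ and $\nabla f^T = I + \Dt\,\nabla F^T$, exploit the $C^1$ hypothesis to write $\beta_{n+1}-\beta_n = \beta'_t\Dt + o(\Dt)$ so the $1/\Dt$ singularity cancels, discard the higher-order increments, and formally pass the resulting Riemann--Ito sum to the stochastic integral. The only cosmetic difference is that you group $(\beta_{n+1}-\beta_n)I$ before expanding, while the paper expands $\beta_{n+1}=\beta_n+\beta'_n\Dt$ directly; the computation is identical.
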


\begin{proof}
The first main term in \Cref{t:kernstep} becomes
\begin{equation}\begin{split} \label{e:nablak}
  \nabla \log k \left(\DB_n\right)
  = - \frac {\DB_n}{\Dt}.
\end{split}\end{equation}
Note that here $\DB$ is a covector.
Denote $\beta_n:= \beta_{nT/N}$, then the other main term becomes
\begin{equation*}\begin{split}
  \beta_{n+1} I - \beta_n \nabla f(x_n) 
  = \beta_{n+1} I - \beta_n (I + \Dt \nabla F(x_n)) 
  \\
  = (\beta_{n} + \beta_n'\Dt )I - \beta_n (I + \Dt \nabla F(x_n)) + O(\Dt^2)
  \\
  = (\beta_n' I - \beta_n  \nabla F(x_n)) \Dt + O(\Dt^2)
\end{split}\end{equation*}

Substitute into \Cref{t:kernstep}, we get 
\begin{equation*}\begin{split}
  \nabla \log h_N(x_N) 
  = 
  \E{
  \beta_0 \nabla \log h_0(x_0)
  +
  \frac 1 \sigma
  \sum _{n=0}^{N-1} 
  - 
  \left((\beta_n' I - \beta_n  \nabla F^T(x_n)) \Dt + O(\Dt^2) \right)\frac {\DB_n}{\Dt}
  \middle|
  x_N
  } 
\end{split}\end{equation*}
Neglect the sum of $O(\DB \Dt)$ terms,
\begin{equation*}\begin{split}
  \nabla \log h_N(x_N) 
  = 
  \E{
  \beta_0 \nabla \log h_0(x_0)
  - 
  \frac 1 \sigma
  \sum _{n=0}^{N-1} 
  (\beta_n' I - \beta_n  \nabla F^T (x_n) ) \DB_n
  \middle|
  x_N
  } 
\end{split}\end{equation*}
Formally pass to the limit $\Dt \rightarrow 0$ to get the theorem.
\end{proof}

\subsection{Divergence formula}
\label{s:divCts}

The main terms in \Cref{t:divkernstep} are 
$\div g_{b_n*}(x_{n})$ and $ g_{b_n}^{*-1} $.
For time-discretized SDEs, recall
\begin{equation*}\begin{split}
  g_{b_n}(x_n) 
  = x_n + F(x_n)\Dt + \sigma(x_n) \DB_n,
  \\
  g_{b_n *}(x_n) 
  = I + \nabla F(x_n)\Dt +  \DB_n \nabla \sigma^T(x_n).
\end{split}\end{equation*}
Here $\nabla F$ is a matrix whose $i$-th row $j$-th column is 
\begin{equation*}\begin{split}
  \left[\nabla F \right]_{ij}
  = \pp {F^i} {x^j}
  =: \partial_j F^i.
\end{split}\end{equation*}
And $\nabla \sigma^T$ is a row vector, so $\DB \nabla \sigma^T$ is a matrix whose $i$-th row $j$-th column is 
\begin{equation*}\begin{split}
  \left[\DB \nabla \sigma^T \right]_{ij}
  = \pp {\sigma} {x^j} \DB^i
  =: \partial_j \sigma \DB^i.
\end{split}\end{equation*}
Ignoring terms which go to zero in the continuous-time limit, we can derive more explicit expressions of the main terms.
This subsection first derives formal expression of $\div g_{b*}$ and $g_{b_n}^{*-1}$, then obtain the divergence formula for the scores of SDEs.

\begin{lemma} [formal approximation of $\div g_{b*}$] \label{t:div}
Ignoring terms which go to zero in the continuous-time limit,
\begin{equation*}\begin{split}
  \div g_{b_n *} (x_n)
  \approx 
  \nabla \div F(x_n) \Dt 
  + \nabla^2 \sigma(x_n) \DB
  - \nabla^2 \sigma \nabla \sigma (x_n) \Dt.
\end{split}\end{equation*}
Here $ \nabla^2 \sigma \DB = \sum_i \nabla \partial_i \sigma(x_n) \DB_n^i$,
$\nabla^2 \sigma \nabla \sigma  \Dt = \sum_i \nabla \partial_i \sigma(x_n) \partial_i \sigma(x_n) \Dt$.
\end{lemma}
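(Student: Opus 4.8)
The plan is to compute the covector $\div g_{b_n*} = \nabla\log|g_{b_n*}|$ directly from its definition in \Cref{e:zhao}, expanding the log-determinant only to the order that survives the continuous-time limit. Writing $A := g_{b_n*} = I + E$ with $E := \nabla F(x_n)\Dt + \DB_n\nabla\sigma^T(x_n)$, I would split $E = P + Q$ into the drift part $P := \nabla F\,\Dt$ and the noise part $Q := \DB_n\nabla\sigma^T$. The essential bookkeeping is that $P$ is of order $\Dt$ while $Q$ is only of order $\sqrt{\Dt}$, so a product of two $Q$'s is again of order $\Dt$ and must be retained; this is exactly where the Ito scaling $\DB_n^i\DB_n^j \approx \delta_{ij}\Dt$ will enter.

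Next I would use $\log|A| = \mathrm{tr}\log(I+E) = \mathrm{tr}\!\left(E - \tfrac12 E^2 + \cdots\right)$ and discard every term that vanishes in the limit. Keeping orders $\Dt$ and $\DB_n$ (hence $Q^2$, but not $PQ$, $P^2$, or $Q^3$), this collapses to
\[
  \log|g_{b_n*}| \approx \mathrm{tr}(P) + \mathrm{tr}(Q) - \tfrac12\mathrm{tr}(Q^2).
\]
The three traces are elementary: $\mathrm{tr}(P) = \Dt\,\mathrm{tr}(\nabla F) = \div F\,\Dt$; $\mathrm{tr}(Q) = \nabla\sigma^T\DB_n = \ip{\nabla\sigma,\DB_n}$; and, using $\mathrm{tr}(Q^2) = (\nabla\sigma^T\DB_n)^2$ together with the Ito replacement $\DB_n^i\DB_n^j \to \delta_{ij}\Dt$, one gets $\mathrm{tr}(Q^2) \to |\nabla\sigma|^2\Dt$. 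Finally I would take the spatial gradient $\nabla$ of this scalar; since $\DB_n$ does not depend on $x$, the derivative falls only on $\sigma$ and $F$, producing $\nabla\div F\,\Dt$ from the first term, $\nabla^2\sigma\,\DB_n$ from the second, and $-\tfrac12\nabla|\nabla\sigma|^2\Dt = -\nabla^2\sigma\nabla\sigma\,\Dt$ from the third, which are exactly the three claimed terms.

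I expect the main obstacle to be this quadratic noise term, whose subtlety is twofold: first, one must notice that $Q^2 = O(\Dt)$ despite looking second-order, so it cannot be dropped alongside $PQ$ and $P^2$; and second, the sum $\sum_{i,j}\partial_i\sigma\,\partial_j\sigma\,\DB_n^i\DB_n^j$ must be reduced to its diagonal $\sum_i(\partial_i\sigma)^2\Dt$ via the Ito rule. Getting this single correction right is precisely what generates the otherwise non-obvious $-\nabla^2\sigma\nabla\sigma\,\Dt$ term, while the rest is routine order-counting. An equivalent route, should the log-determinant expansion feel opaque, is Jacobi's formula $\partial_k\log|A| = \mathrm{tr}(A^{-1}\partial_k A)$ with $A^{-1}\approx I - P - Q$, which isolates the same surviving contribution $-\mathrm{tr}(Q\,\partial_k Q)$ and yields the identical answer.
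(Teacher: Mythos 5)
Your proposal is correct and reaches exactly the three terms of the lemma; it differs from the paper's proof only in how the expansion is organized, not in substance. The paper expands the determinant itself by the Leibniz formula, arguing that products of off-diagonal entries and cross terms $\DB_n^i\DB_n^j$ ($i\neq j$) are negligible because the components of $B$ are independent, so $|g_{b_n*}| \approx 1 + \div F\,\Dt + \nabla\sigma\cdot\DB_n$ with no surviving quadratic correction; the Ito term $-\nabla^2\sigma\nabla\sigma\,\Dt$ then appears only at the end, from multiplying the numerator's $\nabla^2\sigma\,\DB_n$ against the $-\nabla\sigma\cdot\DB_n$ produced by expanding $1/|g_{b_n*}|$. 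You instead expand $\log|g_{b_n*}| = \mathrm{tr}\log(I+P+Q)$, so the same correction appears earlier, as $-\tfrac12\mathrm{tr}(Q^2) \approx -\tfrac12|\nabla\sigma|^2\Dt$, and the gradient $-\tfrac12\nabla\left(|\nabla\sigma|^2\right)\Dt = -\nabla^2\sigma\nabla\sigma\,\Dt$ closes the argument. The two routes are consistent because $Q = \DB_n\nabla\sigma^T$ is rank one, so $(\mathrm{tr}\,Q)^2 = \mathrm{tr}(Q^2)$ and the determinant genuinely has no order-$\Dt$ quadratic term even though its logarithm does. What your route buys: the cyclicity of the trace replaces the paper's combinatorial argument about which off-diagonal entries can be chosen in the Leibniz expansion, and the Jacobi-formula variant you mention, $\partial_k\log|A| = \mathrm{tr}(A^{-1}\partial_k A)$, is cleaner still. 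What it requires in exchange is precisely the two points you flag yourself: recognizing $\mathrm{tr}(Q^2) = (\nabla\sigma\cdot\DB_n)^2 = O(\Dt)$ despite its second-order appearance, and the diagonal Ito replacement $\DB_n^i\DB_n^j \to \delta_{ij}\Dt$ --- the same two observations on which the paper's proof turns.
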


\begin{proof}
Recall our definition in \Cref{e:zhao},
\begin{equation*}\begin{split}
  \div g_{b_n *}(x_n) 
  := \frac{\nabla \left| g_{b_n*} \right|}{\left| g_{b_n*} \right|}(x_n).
\end{split}\end{equation*}
We temporarily neglect the subscript $n$ and $(x_n)$,
\begin{equation*}\begin{split}
  \left| g_{b*} \right|
  = \det (I + \nabla F(x)\Dt +  \DB \nabla \sigma^T(x))
  \\
  = \det \begin{bmatrix} 
    \ddots& \\
          & 1 + \partial_i F^i \Dt + \partial_i \sigma \DB^i & \cdots 
          & \partial_j F^i \Dt + \partial_j \sigma \DB^i   
          \\
          & \vdots & \ddots & \vdots  
          \\
          & \partial_i F^j \Dt + \partial_i \sigma \DB^j & \cdots 
          & 1 + \partial_j F^j \Dt + \partial_j \sigma \DB^j
          \\
          & & & & \ddots
  \end{bmatrix},
\end{split}\end{equation*}
where $i, j$ labels different directions in $\R^M$.

We compute the determinant by expansion, or the Leibniz formula, while ignoring high-order terms.
All the off-diagonal terms are $O(\DB)$, so if we choose three or more off-diagonal terms, then it is an $O(\DB^3)$ term, which can be ignored.
So we can only choose two off-diagonal terms; after ignoring high-order terms, we are left with terms like
\begin{equation*}\begin{split}
  (\partial_j F^i \Dt + \partial_j \sigma \DB^i)
  (\partial_i F^j \Dt + \partial_i \sigma \DB^j)
  \approx
  \partial_j \sigma \DB^i \partial_i \sigma \DB^j.
\end{split}\end{equation*}
Since $B^i$ and $B^j$ are independent Brownian motions, above cross terms with $i\neq j$ can be ignored.
So the determinant is approximately only a product of the diagonal entries.
Again, ignoring high-order terms which go to zero in the continuous-time limit,
\begin{equation*}\begin{split}
  \left| g_{b*} \right|
  \approx
  1 + \sum_{i=1}^M \left(\partial_i F^i \Dt + \partial_i \sigma \DB^i\right)
  =
  1 +  \div F \Dt 
  + \nabla \sigma \cdot \DB.
\end{split}\end{equation*}

Substitute into the divergence, we get
\begin{equation*}\begin{split}
  \div g_{b*}
  = \frac{\nabla \left|  g_{b*} \right|}{\left|  g_{b*} \right|} 
  = \frac{\nabla \div F \Dt + \nabla (\nabla \sigma\cdot \DB) }
  {1 +  \div F \Dt + \nabla \sigma\cdot \DB } .
\end{split}\end{equation*}
Here $ \nabla (\nabla \sigma \cdot \DB) $ is a vector whose $i$-th entry is 
$
\partial_i \sum_j \partial_j \sigma \DB^j
$, so 
\begin{equation*}\begin{split}
   \nabla (\nabla \sigma \cdot \DB) = \nabla^2 \sigma \DB,
\end{split}\end{equation*}
where the Hessian $\nabla^2 \sigma$  is a symmetric $M\times M$ matrix with $
[\nabla^2 \sigma]_{ij} = \partial_i \partial_j \sigma.
$

Recall that $1/(1+\eps) = 1 - \eps +O(\eps^2)$, ignoring high-order terms, we get
\begin{equation*}\begin{split}
  \div g_{b*}
  \approx 
  \left(
  \nabla \div F \Dt + \nabla^2 \sigma \DB
  \right)
  \left(
  {1 -  \div F \Dt - \nabla \sigma\cdot \DB}
  \right)
  \\
  \approx \nabla \div F \Dt 
  + \nabla^2 \sigma \DB
  - (\nabla \sigma \cdot \DB) \nabla^2 \sigma \DB
\end{split}\end{equation*}
For the last term, ignore cross terms ($\DB^i\DB^j$ with $i\neq j$) and apply $\DB^i \DB^i \approx \Dt$, 
\begin{equation*}\begin{split}
  (\nabla\sigma \cdot \DB) \nabla^2 \sigma \DB
  = \sum_i \sum_j \partial_j \sigma \DB^j \nabla \partial_i \sigma \DB^i
  \approx 
  \sum_i \nabla \partial_i \sigma \partial_i \sigma \Dt
  = \nabla^2 \sigma \nabla \sigma  \Dt
  .
\end{split}\end{equation*}

\end{proof}

\begin{lemma} [formal approximation of $g_{b_n}^{*-1}$] \label{t:ginv}
Ignoring terms which go to zero in the continuous-time limit,
\begin{equation*}\begin{split}
  g_{b_n}^{*-1}
  \approx 
  I- \nabla F^T(x_n) \Dt - \nabla \sigma(x_n) \DB_n^T 
  + 
  \nabla \sigma \nabla \sigma^T (x_n)\Dt.
\end{split}\end{equation*}
\end{lemma}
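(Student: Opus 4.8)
The plan is to treat $g_{b_n}^{*-1}$ as the transpose of the inverse of the pushforward matrix, and to invert via a truncated Neumann series while tracking that the increment $\DB_n$ scales like $\sqrt{\Dt}$. Since the pullback on covectors is the transpose of the pushforward on vectors, $g_{b_n}^{*}=(g_{b_n*})^T$, so $g_{b_n}^{*-1}=\big((g_{b_n*})^{-1}\big)^T$. It therefore suffices to expand $(g_{b_n*})^{-1}$ to order $\Dt$ and transpose at the very end.

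Writing $g_{b_n*}=I+A$ with $A:=\nabla F\,\Dt + \DB_n\nabla\sigma^T$, I would use $(I+A)^{-1}=I-A+A^2-A^3+\cdots$. The crucial point is the order bookkeeping: the term $\nabla F\,\Dt$ is $O(\Dt)$, but $\DB_n\nabla\sigma^T$ is only $O(\sqrt\Dt)$, so $A$ itself is $O(\sqrt\Dt)$; consequently the quadratic term $A^2$ contributes at order $\Dt$ and \emph{must} be retained, whereas $A^3=O(\Dt^{3/2})$ and all higher terms may be dropped. At the linear level $-A=-\nabla F\,\Dt-\DB_n\nabla\sigma^T$ already supplies the first two correction terms in the statement.

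The one substantive computation is $A^2$ to order $\Dt$. Only the product $(\DB_n\nabla\sigma^T)(\DB_n\nabla\sigma^T)$ survives, since every other product in $A^2$ carries an extra factor of $\Dt$ or $\sqrt\Dt$; its $(i,l)$ entry is $\DB_n^i\,\partial_l\sigma\,(\nabla\sigma\cdot\DB_n)$. Applying the Itô rule $\DB_n^i\DB_n^k\approx\delta_{ik}\Dt$ and discarding the cross terms with $i\neq k$ — exactly the step already used in the proof of \Cref{t:div} — collapses this to $\partial_i\sigma\,\partial_l\sigma\,\Dt$, i.e. $A^2\approx\nabla\sigma\nabla\sigma^T\,\Dt$. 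Hence $(g_{b_n*})^{-1}\approx I-\nabla F\,\Dt-\DB_n\nabla\sigma^T+\nabla\sigma\nabla\sigma^T\,\Dt$, and transposing — using that $\nabla\sigma\nabla\sigma^T$ is symmetric and $(\DB_n\nabla\sigma^T)^T=\nabla\sigma\,\DB_n^T$ — yields the claimed expression. As a consistency check one can verify $(I+A)(I-A+A^2)\approx I$, the leftover being $A^3=O(\Dt^{3/2})$.

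I expect the only real obstacle to be the order-counting itself: the naive reflex of reading $A$ as $O(\Dt)$ would discard $A^2$ entirely and lose the $\nabla\sigma\nabla\sigma^T\,\Dt$ term, which is precisely the Itô second-order contribution. Keeping $A^2$ and regularizing it with $\DB_n^i\DB_n^k\approx\delta_{ik}\Dt$ is what distinguishes this multiplicative-noise computation from the additive case.
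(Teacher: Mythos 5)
Your proposal is correct and follows essentially the same route as the paper: a truncated Neumann expansion $(I+A)^{-1}\approx I-A+A^2$ keeping the quadratic term because $\DB_n=O(\sqrt{\Dt})$, followed by the It\^o rule $\DB_n^i\DB_n^k\approx\delta_{ik}\Dt$ to reduce $(\DB_n\nabla\sigma^T)^2$ to $\nabla\sigma\nabla\sigma^T\,\Dt$. The only cosmetic difference is that you invert the pushforward and transpose at the end, whereas the paper transposes first (working with $g^{*}_{b_n}=I+\nabla F^T\Dt+\nabla\sigma\,\DB_n^T$) and then inverts; since transposition and inversion commute, the computations are identical.
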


\begin{proof}
In $\R^M$, under our consensus that both vectors and covectors are column vectors, the pullback operator on covectors is just the transpose of the pushforward operator, so
\begin{equation*}\begin{split}
  g_{b_n}^*(x_n) 
  = I + \nabla F^T (x_n) \Dt + \nabla \sigma(x_n) \DB_n^T.
\end{split}\end{equation*} 
For a small matrix $\eps$, $(I+\eps)^{-1} = I - \eps + \eps^2 +O(\eps^3)$, so
\begin{equation*}\begin{split}
  g_{b_n}^{*-1}
  \approx 
  I- \nabla F^T \Dt - \nabla \sigma(x_n) \DB_n^T + \left(
  \nabla F^T (x_n) \Dt + \nabla \sigma(x_n) \DB_n^T 
  \right)^2
  \\
  \approx 
  I- \nabla F^T \Dt - \nabla \sigma(x_n) \DB_n^T 
  + \nabla \sigma(x_n) \DB_n^T \nabla \sigma(x_n) \DB_n^T ,
\end{split}\end{equation*} 
where $
  \left[ \nabla \sigma \DB^T \right]_{ij} = \partial_i \sigma \DB^j
$.
For the last matrix, we ignore cross terms to get
\begin{equation*}\begin{split}
  \left[\nabla \sigma \DB^T \nabla \sigma \DB^T \right]_{ij}
  = \sum_k \left[\nabla \sigma \DB^T\right]_{ik}
  \left[\nabla \sigma \DB^T \right]_{kj}
  \\
  = \sum_k \partial_i \sigma \DB^k
  \partial_k \sigma \DB^j
  \approx
  \partial_i \sigma \partial_j \sigma \Dt
\end{split}\end{equation*}
Hence,  $
\nabla \sigma \DB^T \nabla \sigma \DB^T
\approx 
\nabla \sigma \nabla \sigma^T \Dt.
$
\end{proof}

Let $\Delta \sigma$ denote the Laplacian of the scalar function $\sigma$ (note that we overload the notation $\Delta$ for both Laplacian and small steps), we have

\begin{theorem} [formal divergence score fomula for SDE in \Cref{e:sde}] \label{t:div_sde}
Let $\{ \nu_t \}_{ t=0 }^T$ be the forward covector process
\begin{equation*}\begin{split}
  d \nu
  = \left( 
  (\nabla \sigma \nabla \sigma^T 
  - \nabla F^T )\nu 
  - \nabla \div F 
  + \nabla^2 \sigma \nabla \sigma 
  + \nabla \sigma \Delta \sigma \right) dt
  - 
  (\nabla \sigma \nu^T 
  + \nabla^2 \sigma ) dB
  .
\end{split}\end{equation*}
with initial condition $ \nu_0 = \nabla \log h_0(x_0)$,
then
\begin{equation*}\begin{split}
  \nabla \log h_T (x_T) 
  = \E{\nu_T  \middle| x_T}.
\end{split}\end{equation*}
\end{theorem}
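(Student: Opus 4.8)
The plan is to pass the recursive divergence formula of \Cref{t:RecurDivnstep} to the continuous-time limit, using the formal expansions of $g_{b_n}^{*-1}$ and $\div g_{b_n*}$ just obtained in \Cref{t:ginv,t:div}. \Cref{t:RecurDivnstep} already supplies $\nabla \log h_N(x_N) = \E{\nu_N \middle| x_N}$ for the forward recursion $\nu_{n+1} = g_{b_n}^{*-1}(\nu_n - \div g_{b_n*}(x_n))$ with $\nu_0 = \nabla \log h_0(x_0)$. The initial condition and the conditional-expectation identity carry over verbatim, so the only real task is to compute the one-step increment $\nu_{n+1} - \nu_n$ and read off the drift and diffusion coefficients of the limiting SDE.

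First I would substitute
\[
g_{b_n}^{*-1} \approx I - \nabla F^T \Dt - \nabla \sigma \DB^T + \nabla \sigma \nabla \sigma^T \Dt, \qquad \div g_{b_n *} \approx \nabla \div F \Dt + \nabla^2 \sigma \DB - \nabla^2 \sigma \nabla \sigma \Dt
\]
into the recursion and expand the product $g_{b_n}^{*-1}(\nu_n - \div g_{b_n*})$, retaining only terms up to first order under the Ito convention that $\DB$ counts as half an order of $\Dt$; concretely $\DB^i \DB^j \approx \delta_{ij}\Dt$, while any product of $\DB$ with $\Dt$ is dropped. Because $\nu_n$ is $\cF_{n\Dt}$-measurable and hence independent of the forward increment $\DB_n$, every term in which $\DB_n$ multiplies an adapted integrand is a genuine Ito increment, so the resulting limit is a forward Ito SDE consistent with the convention of \Cref{e:sde}. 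Collecting the surviving $O(\Dt)$ terms yields the drift $(\nabla \sigma \nabla \sigma^T - \nabla F^T)\nu - \nabla \div F + \nabla^2 \sigma \nabla \sigma + \nabla \sigma \Delta \sigma$, and the $O(\DB)$ terms yield the diffusion $-(\nabla \sigma \nu^T + \nabla^2 \sigma)$, which is exactly the stated SDE.

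The one genuinely subtle point, and the step I expect to be the main obstacle, is the single cross term $(-\nabla \sigma \DB^T)(-\nabla^2 \sigma \DB) = \nabla \sigma\,(\DB^T \nabla^2 \sigma \DB)$. Its scalar factor is a quadratic form in the increment, and the Ito contraction $\DB^T \nabla^2 \sigma \DB \approx \sum_i [\nabla^2 \sigma]_{ii}\Dt = \Delta \sigma \Dt$ converts it into the drift contribution $\nabla \sigma \Delta \sigma\, dt$. This Laplacian term is new: it appears in neither \Cref{t:div} nor \Cref{t:ginv} individually and is produced purely by the second-order Ito correction when the two $\DB$-linear pieces meet. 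Every other cross term is either $O(\Dt^2)$ or an $O(\DB\,\Dt)$ term that vanishes in the limit, so a careful enumeration confirms that nothing else survives.

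Finally, as throughout this section, I would flag that the argument is only formal: one still owes a proof that the neglected higher-order remainders sum to something vanishing as $\Dt \to 0$, and that this limit commutes with the conditional expectation. These rigor questions are deferred, in keeping with the stated convention that the continuous-time results are organized as theorems purely for readability.
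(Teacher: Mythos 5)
Your proposal is correct and follows essentially the same route as the paper: substitute the formal expansions of $g_{b_n}^{*-1}$ and $\div g_{b_n*}$ from \Cref{t:ginv,t:div} into the recursion of \Cref{t:RecurDivnstep}, expand with the Ito rules $\DB^i\DB^j \approx \delta_{ij}\Dt$, and extract the drift and diffusion coefficients. In particular, you correctly isolate the key step the paper also highlights, namely the cross term $\nabla \sigma \, \DB^T \nabla^2 \sigma \DB \approx \nabla \sigma \Delta \sigma \, \Dt$ producing the Laplacian contribution to the drift.
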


\begin{proof}
For convenience, we collect the results results in \Cref{t:RecurDivnstep,t:div,t:ginv},
\begin{equation*}\begin{split}
  \nu_{n+1} = g_{b_n}^{*-1} (\nu_n - \div g_{b_n*}(x_{n})),
  \quad \textnormal{where} \quad 
  \\
  \div g_{b_n *} (x_n)
  \approx 
  \nabla \div F(x_n) \Dt 
  + \nabla^2 \sigma(x_n) \DB
  - \nabla^2 \sigma \nabla \sigma (x_n) \Dt.
  \\
  g_{b_n}^{*-1}
  \approx 
  I- \nabla F^T(x_n) \Dt - \nabla \sigma(x_n) \DB_n^T 
  + 
  \nabla \sigma \nabla \sigma^T (x_n)\Dt.
\end{split}\end{equation*}

%\begin{equation*}\begin{split}
  %\div g_{b_n *} (x_n)
  %\approx 
  %\nabla \div F(x_n) \Dt 
  %+ \sum_i \nabla \partial_i \sigma(x_n) \DB_n^i
  %- \sum_i \nabla \partial_i \sigma(x_n) \partial_i \sigma(x_n) \Dt.
%\end{split}\end{equation*}

By substitution, we get
\begin{equation*}\begin{split}
  \nu_{n+1} 
  \approx
  (I- \nabla F^T \Dt - \nabla \sigma \DB^T 
  + 
  \nabla \sigma \nabla \sigma^T \Dt)
  (\nu - \nabla \div F \Dt 
  - \nabla^2 \sigma \DB
  + \nabla^2 \sigma \nabla \sigma \Dt )
\end{split}\end{equation*}

Denote $\Delta \nu_n:= \nu_{n+1} - \nu_n$, then
\begin{equation*}\begin{split}
  \Delta \nu_n 
  \approx
  - \nabla F^T \nu \Dt - \nabla \sigma \DB^T \nu 
  + 
  \nabla \sigma \nabla \sigma^T \nu \Dt
  - \nabla \div F \Dt 
  \\
  - \nabla^2 \sigma \DB
  + \nabla^2 \sigma \nabla \sigma \Dt
  + \nabla \sigma \DB^T \nabla^2 \sigma \DB.
\end{split}\end{equation*}
For the last term, ignoring the cross terms, we get
\begin{equation*}\begin{split}
  \DB^T \nabla^2 \sigma \DB 
  = \sum_{i,j} \DB^i \partial_i\partial_j \sigma \DB^j
  \approx \sum_i \partial_i^2 \sigma \Dt
  =: \Delta \sigma \Dt ,
\end{split}\end{equation*}
Also note that $ \DB^T \nu = \nu^T \DB $ is a scalar, so
\begin{equation*}\begin{split}
  \Delta \nu_n 
  \approx
  \left( 
  (\nabla \sigma \nabla \sigma^T 
  - \nabla F^T )\nu 
  - \nabla \div F 
  + \nabla^2 \sigma \nabla \sigma 
  + \nabla \sigma \Delta \sigma \right) \Dt
  - \nabla \sigma \nu^T \DB
  - \nabla^2 \sigma \DB
  .
\end{split}\end{equation*}
Then we let $\Dt\rightarrow 0$ and formally pass to the limit to get the theorem.
\end{proof}

\subsection{Divergence-kernel formula} 
\label{s:divkerCts}

We formally pass \Cref{t:divkernstep_covec} to the continuous-time limit.
Then we formally pass \Cref{t:divkernstep_covec_noh0} to the continuous-time limit, which is an example of using a particular schedule to remove $\nabla \log h_0$ from the expression; there are also other schedules achieving this goal.

\goldbach*

%\begin{theorem} [formal divergence-kernel score fomula for SDE] \label{t:divker_sde}
%For any backward adapted $C^1$ process $\{ \alpha_t \}_{ t=0 }^T$, let $\{ \nu_t \}_{ t=0 }^T$ be the forward covector process
%\begin{equation*}\begin{split}
  %d \nu
  %= \left( 
  %(\nabla \sigma \nabla \sigma^T 
  %- \nabla F^T - \alpha)\nu 
  %- \nabla \div F 
  %+ \nabla^2 \sigma \nabla \sigma 
  %+ \nabla \sigma \Delta \sigma \right) dt
  %- 
  %(\nabla \sigma \nu^T 
  %+ \nabla^2 \sigma 
  %+ \alpha \sigma^{-1}) dB
  %.
%\end{split}\end{equation*}
%with initial condition $ \nu_0 = \nabla \log h_0(x_0)$,
%then
%\begin{equation*}\begin{split}
  %\nabla \log h_T (x_T) 
  %= \E{\nu_T  \middle| x_T}.
%\end{split}\end{equation*}
%\end{theorem}

\begin{remark*}
  If the system has decay of correlations and $T$ is large, we can arbitrarily set the initial condition, and the effect would be small.
\end{remark*}

\begin{proof}
In \Cref{t:divkernstep_covec}, we pass  $\alpha_n$  to $\alpha_n \Dt$, then the covector process $\{ \nu_n \}_{ n=0 }^N$ solves
\begin{equation*}\begin{split}
\nu_{n+1} 
= 
(1-\alpha_{n+1}\Dt) g_{b_n}^{*-1}
\left( \nu_{n} -  \div g_{b_{n}*} (x_{n}) \right)
+ \frac { \alpha_{n+1}\Dt \nabla \log k (\DB_{n}) } {\sigma(x_{n})} 
\end{split}\end{equation*}
Since $ \nabla \log k \left(\DB\right) = - \DB/ \Dt$, the $\Dt$ terms cancel,
\begin{equation*}\begin{split}
\nu_{n+1} 
= (1-\alpha_{n+1}\Dt) g_{b_n}^{*-1}
\left( \nu_{n} -  \div g_{b_{n}*} (x_{n}) \right)
- \frac { \alpha_{n+1} \DB_{n} } {\sigma(x_{n})} .
\end{split}\end{equation*}
%Since $|\alpha_{n+1} - \alpha_n|$ is $O(\Dt)$, we may as well use $\alpha_n$.
By substitution, we get
\begin{equation*}\begin{split}
  \nu_{n+1} 
  \approx
  (1-\alpha_{n+1}\Dt) 
  (I- \nabla F^T \Dt - \nabla \sigma \DB^T 
  + 
  \nabla \sigma \nabla \sigma^T \Dt)
  \\
  (\nu - \nabla \div F \Dt 
  - \nabla^2 \sigma \DB
  + \nabla^2 \sigma \nabla \sigma \Dt )
  - \frac { \alpha_{n+1} \DB} {\sigma} .
\end{split}\end{equation*}
By the same arguments as in \Cref{t:div_sde},
\begin{equation}\begin{split}
\label{e:backito}
  \Delta \nu_n 
  \approx
  \left( 
  (\nabla \sigma \nabla \sigma^T 
  - \nabla F^T -\alpha_{n+1} )\nu 
  - \nabla \div F 
  + \nabla^2 \sigma \nabla \sigma 
  + \nabla \sigma \Delta \sigma 
  \right) \Dt
  \\
  - (\nabla \sigma \nu^T 
  + \nabla^2 \sigma 
  + \alpha_{n+1} \sigma^{-1} ) \DB
  .
\end{split}\end{equation}
Here $\alpha_{n+1}$ is evaluated at step $n+1$; all other terms are evaluated at step $n$ and location $x_n$. 
Then we let $\Dt\rightarrow 0$ and formally pass to the limit to obtain the theorem.
\end{proof}

We can eliminate $\nabla \log h_0$ using certain schedules.
The following theorem is an example; it is parallel to the Bismut-Elworthy-Li (BEL) formula, which considers derivatives with respect to singular initial conditions.
Both the BEL and the current formula are good for theoretical purposes, since they reduce some regularity requirements.
For numerical computations, they both require that $T$ be neither too small nor too large.
For small $T$, the last term in the expression below is large, since too much is shifted from the divergence to the kernel in a short time, so $\nu$ is large; for large $T$, the lack of damping on $\nu$ makes it too large.
And large $\nu$ requires more samples.
For large $T$, we can combine two schedules: for large $t$, we aim to temper the growth of the score; for small $t$, we aim to eliminate $\nabla \log h_0$.

\begin{theorem} [formal divergence-kernel score fomula for SDE, without $\nabla h_0$] \label{t:divker_sde_noh0}
Let $\{ \nu_t \}_{ t=0 }^T$ be the forward covector process
\begin{equation*}\begin{split}
  d \nu
  = \left( 
  (\nabla \sigma \nabla \sigma^T 
  - \nabla F^T)\nu 
  + \frac tT \left(
  \nabla^2 \sigma \nabla \sigma 
  + \nabla \sigma \Delta \sigma 
  - \nabla \div F 
  \right) 
  \right) dt
  - 
  \left(\nabla \sigma \nu^T 
  + \frac tT \nabla^2 \sigma 
  + \frac 1{T \sigma}\right) dB
  .
\end{split}\end{equation*}
with initial condition $ \nu_0 = 0$,
then
\begin{equation*}\begin{split}
  \nabla \log h_T (x_T) 
  = \E{\nu_T  \middle| x_T}.
\end{split}\end{equation*}
\end{theorem}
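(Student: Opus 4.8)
The plan is to pass the discrete recursion of \Cref{t:divkernstep_covec_noh0} directly to the continuous-time limit. It is important to do this rather than substitute a schedule into \Cref{t:divker_sde}: the no-$h_0$ recursion is genuinely different from the forward recursion in \Cref{t:divkernstep_covec}, since it carries no $(1-\alpha)$ damping factor on $\nu$ but instead places the weights $n/N$ and $1/N$ on the divergence and kernel sources. Writing $t=n\Dt$ and $T=N\Dt$, I would replace $n/N$ by $t/T$ and $1/N$ by $\Dt/T$, and use $\nabla\log k(\DB)=-\DB/\Dt$ from \Cref{e:nablak} so the kernel term collapses to $-\DB/(T\sigma)$. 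This turns the recursion into
\begin{equation*}\begin{split}
  \nu_{n+1}
  = g_{b_n}^{*-1}\left(\nu_n - \frac tT \div g_{b_n*}(x_n)\right)
  - \frac{\DB_n}{T\sigma(x_n)}.
\end{split}\end{equation*}

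Next I would substitute the formal expansions of \Cref{t:div,t:ginv} for $\div g_{b_n*}$ and $g_{b_n}^{*-1}$, expand the product, and keep only the $O(\Dt)$ and $O(\DB)$ contributions, applying the Ito rule $\DB^i\DB^j\approx\delta_{ij}\Dt$ to the pieces quadratic in $\DB$ and discarding the genuine cross terms. This is precisely the computation already carried out for \Cref{t:div_sde}, with exactly two modifications to track: every term originating from $\div g_{b_n*}$ now carries the weight $t/T$, while the terms sourced by $\nu$ do not; and the diffusion part picks up the extra summand $-\DB/(T\sigma)$ from the kernel. Collecting terms, the drift assembles to $(\nabla\sigma\nabla\sigma^T-\nabla F^T)\nu + \frac tT(\nabla^2\sigma\nabla\sigma + \nabla\sigma\Delta\sigma - \nabla\div F)$ and the diffusion coefficient to $-(\nabla\sigma\nu^T + \frac tT\nabla^2\sigma + \frac1{T\sigma})$. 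Letting $\Dt\to0$ yields the stated SDE; since $\nu_0=0$ matches the initial condition and every term reproduces the discretization of \Cref{t:divkernstep_covec_noh0}, the identity for $\nabla\log h_T(x_T)$ follows from that theorem.

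The step needing the most care is the single surviving Ito cross term. The factor $-\nabla\sigma\DB^T$ inside $g_{b_n}^{*-1}$ multiplies the factor $-\frac tT\nabla^2\sigma\DB$ coming from $-\frac tT\div g_{b_n*}$, producing $\frac tT(\nabla\sigma\DB^T)(\nabla^2\sigma\DB)$; using $\DB^T\nabla^2\sigma\DB\approx\Delta\sigma\,\Dt$ this contributes the drift term $\frac tT\nabla\sigma\Delta\sigma\,\Dt$, which supplies exactly the $t/T$-weighted $\nabla\sigma\Delta\sigma$ in the target. One must also confirm that, unlike in \Cref{t:divker_sde}, no $-\alpha\nu$ damping survives: here the prefactor multiplying $g_{b_n}^{*-1}$ is identically $1$, so $\nu$ is transported undamped and the $t/T$ weights live only on the source terms. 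Everything else is the routine bookkeeping already validated in the proofs of \Cref{t:div_sde,t:divker_sde}.
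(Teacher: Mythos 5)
Your proposal is correct and follows essentially the same route as the paper: start from the discrete recursion of \Cref{t:divkernstep_covec_noh0}, identify $n/N = t/T$ and collapse the kernel term to $-\DB/(T\sigma)$ via \Cref{e:nablak}, substitute the formal expansions of \Cref{t:div,t:ginv}, keep the $O(\Dt)$ and $O(\DB)$ terms with the Ito rule, and pass to the limit. Your explicit tracking of the surviving cross term $\frac tT(\nabla\sigma\DB^T)(\nabla^2\sigma\DB)\approx\frac tT\nabla\sigma\Delta\sigma\,\Dt$ is exactly the step the paper delegates to ``the same arguments as in \Cref{t:div_sde},'' so it is a welcome elaboration rather than a deviation.
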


\begin{remark*}
  Let $\nu'_t$ be the solution of the \Cref{t:divker_sde} with the particular schedule
  $
  \alpha_t = 1/t.
  $
  Let $\nu_t$ be the solution of this theorem, then we can formally check 
  \begin{equation*}\begin{split}
    \frac tT \nu_t' = \nu_t.
  \end{split}\end{equation*}
\end{remark*}

\begin{proof}
In \Cref{t:divkernstep_covec_noh0}, note that $N =T/\Dt$, $t_n=n\Dt$, so 
\begin{equation*}\begin{split}
\nu_{n+1} 
= g_{b_n}^{*-1}
\left( \nu_{n} - \frac {t_n}T \div g_{b_{n}*} (x_{n}) \right)
+ \frac {\nabla \log k (b_{n})\Dt} {T \sigma(x_{n})} .
\end{split}\end{equation*}
By substituting \Cref{t:div,t:ginv} and \Cref{e:nablak}, and ignoring subscript $n$, we get
\begin{equation*}\begin{split}
  \nu_{n+1} 
  \approx
  (I- \nabla F^T \Dt - \nabla \sigma \DB^T + \nabla \sigma \nabla \sigma^T \Dt)
  \\
  \left(\nu - \frac tT \nabla \div F \Dt 
  - \frac tT \nabla^2 \sigma \DB
  + \frac tT \nabla^2 \sigma \nabla \sigma \Dt \right)
  - \frac { \DB} {T \sigma} .
\end{split}\end{equation*}
By the same arguments as in \Cref{t:div_sde},
\begin{equation*}\begin{split}
  \Delta \nu_n 
  \approx
  \left( 
  (\nabla \sigma \nabla \sigma^T 
  - \nabla F^T)\nu 
  +\frac tT (- \nabla \div F 
  + \nabla^2 \sigma \nabla \sigma 
  + \nabla \sigma \Delta \sigma )\right) \Dt
  \\
  - \left(\nabla \sigma \nu^T 
  + \frac tT \nabla^2 \sigma 
  + \frac 1 {T \sigma} \right) \DB
  .
\end{split}\end{equation*}
Then we let $\Dt\rightarrow 0$ and formally pass to the limit to get the theorem.
\end{proof}

\subsection{How to use in cases with contracting directions, and shortcomings}
\label{s:howtouse}

We explain how to select the schedule $\alpha_t$.
Note that any choice of $\alpha_t$ yields the same score, but $\nu_t$ depends on $\alpha_t$, so different choices of $\alpha_t$ significantly affect the numerical performances.
The main goal is to obtain a small $\nu_N$, so we need fewer samples to compute the expectation in the score formula.
It is easy to see that we do not want $\alpha$ too large, which would produce a large inhomogeneous term in the SDE of $\nu$ in \Cref{t:divker_sde}.
But we also want $\alpha$ to be large enough, so that the gradient growth of the score is \textit{tempered} by shifting the divergence formula to kernel-differentiation, which we explain below.

For deterministic systems, the divergence method works poorly when the system has \textit{contracting} directions, that is, when the consecutive multiplication of $I-\nabla F^T \Dt$ grows exponentially fast.
This is different from the path-perturbation method, which involves consecutive multiplication of $I+\nabla F \Dt$, so we worry about expanding directions.
More specifically, let $D^t$ be the operator such that $D^t\nu_0$ solves the homogeneous ODE,
\begin{equation*}\begin{split}
  d \nu
  = \left( \nabla \sigma \nabla \sigma^T - \nabla F^T \right) \nu dt
  - \nabla \sigma \nu^T dB
  .
\end{split}\end{equation*}
from the initial condition $v_0$.
Then a safe choice is to select the constant $\alpha$ with
\begin{equation*}\begin{split}
  \alpha > \frac 1T \log |D^T|, 
\end{split}\end{equation*}
where $|D^T|$ is the operator norm.
For stationary measures, we let $T\rightarrow\infty$.
This ensures that $\nu$ does not grow exponentially when $T$ increases.
In practice, we may also find a large enough $\alpha$ by checking that the norm of $\nu_N$ does not explode.

Should we care very much about the numerical speed, we should design $\alpha_t$ more carefully.
Note that $\alpha_t$ can be a process, not just a fixed number.
If the system has regions with large contraction, then $\alpha_t$ should be large there to reduce the growth of $\nu_t$.
If the system has regions with small noise, then $\alpha_t$ should be small, since the $1/\sigma$ factor is already large.
The divergence-kernel formula would yield the same derivative, but an adapted $\alpha_t$ gives a smaller $\nu_N$, so we need fewer samples to take the conditional average.
In this paper, we only test the simple case where $\alpha_t$ is a fixed number.

Our divergence-kernel method combines the divergence and kernel-differentiation methods, but does not involve the path-perturbation method.
Hence, it does not work in highly contracting systems with little to no noise, the case where the path-perturbation method is good at.
Since the system is contracting, we need a large $\alpha$ to tame the score explosion, but the noise is small, so the $\frac 1 \sigma$ factor is large, so the term $\alpha/\sigma$ is large, so $\nu$ is large.
This requires many samples to compute the expectation, drastically raising the computational cost.
The path method is not a panacea either; it does not work when there are expanding directions, and it does not give the derivative of density.
We may need to use all three methods together for an ultimate solution, as proposed in \cite{Ni_kd}.

\section{Numerical examples}
\label{s:numeric}

We demonstrate our results in a few examples.
In the more important applications, such as diffusions models and linear responses, we typically do not need to really compute the score at a specific point, but we are satisfied with a \textit{sample} of the score using our new formulas.
For those more significant cases, additional knowledge is required, which would deviate from the main focus of this paper, so we leave them to future papers.

For this paper, we want to verify that our new formulas can compute scores at a prescribed point, by averaging our expression over sample paths.
For this purpose, there are some obstructions beyond the scope of this paper:
(1) generating paths with prescribed terminal conditions;
(2) compute $\nabla \log h_T(x_T)$ in another known way to compare with.
To avoid these issues, we design two specific examples, the first is one-dimensional, and the second is high-dimensional but we care about some integration of $\nabla h_T$.

\subsection{1-dimensional OU process with multiplicative noise}
\label{s:eg_1dim}

We use \Cref{t:ker_sde,t:div_sde,t:divker_sde} to compute the score of the one-dimensional Ornstein-Ulenback (OU) process with additive noise and multiplicative noise.
\begin{equation*}\begin{split}
  dx = - x^3 dt + \sigma_i dB,
  \quad \textnormal{where} \quad 
  \sigma_1(x):\equiv 1,
  \quad
  \sigma_2(x):=0.5 + \exp{-x ^2}
\end{split}\end{equation*}
The initial distribution is $x_0\sim N(0,1)$.
Set $\Dt=0.002$.

We are only interested in terminal conditions $x_T$ in the interval $[-1.8,1.8]$, which is further partitioned into $9$ sub-intervals.
We simulate many sample paths, then count the number of paths whose $x_T$ fall into a specific sub-interval, to obtain the empirical distribution of $x_T$ on sub-intervals.
Then we compute the score by averaging our new formulas over the paths, and see if our results reflect the trend of $\log h_T$ from the empirical distribution.

We apply the pure kernel method in \Cref{t:ker_sde} on the SDE with $\sigma_1$.
Set $\beta_t = t/T$, so we do not need $\nabla \log h_0$, and 
\begin{equation*}\begin{split}
  \nabla \log h_T(x_T) 
  = 
  \E{ \frac 1 {T \sigma_1} \int t  (\nabla F)^T (x_t) - 1 dB_t
  \middle| x_T } 
\end{split}\end{equation*}
Note that $\sigma_1$ is constant since the pure kernel method does not work for multiplicative noise.
The result is in \Cref{f:1d_ker}; the scores we compute correctly reflect the trend of $\log h_T$.

\begin{figure}[ht] \centering
  \includegraphics[width=0.45\textwidth]{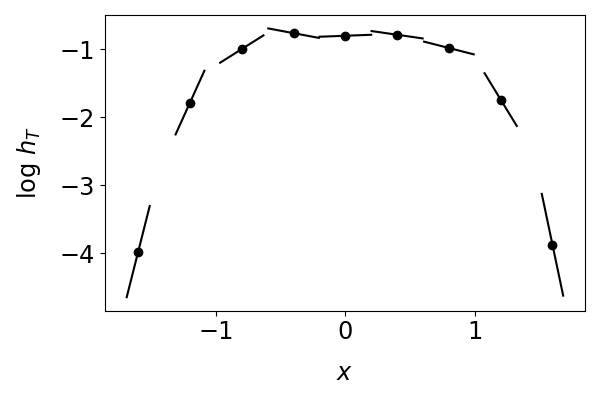}
  \caption{
  Kernel-differentiation method for 1-dimensional SDE with additive noise, $T=3$.
  $\log h_T$ and the score $\nabla \log h_T$ for different $x_T$.
  The dots are $\log h_T$.
  Each short line is a score computed by the kernel-differentiation algorithm, averaged on 10 paths whose terminal conditions $x_T$ fall into the same sub-interval.
  }
  \label{f:1d_ker}
\end{figure}

Then we apply the pure divergence method in \Cref{t:div_sde} on $\sigma_2$, since the system is $1$-dimensional, the SDE of $\nu$ becomes
\begin{equation*}\begin{split}
  d \nu
  = \left( 
  (\sigma'^2 - F' )\nu 
  - F'' + 2 \sigma'' \sigma'
  \right) dt
  - 
  (\sigma' \nu +  \sigma'' ) dB ,
  \quad \textnormal{} \quad 
  \nu_0 = \nabla \log h_0(x_0)= -x_0.
\end{split}\end{equation*}
The results are in \Cref{f:1d_div}; we can see that the pure divergence method works well for a short time period $T=0.1$, but it does not work well for a long time period $T=3$.

\begin{figure}[ht] \centering
  \includegraphics[width=0.45\textwidth]{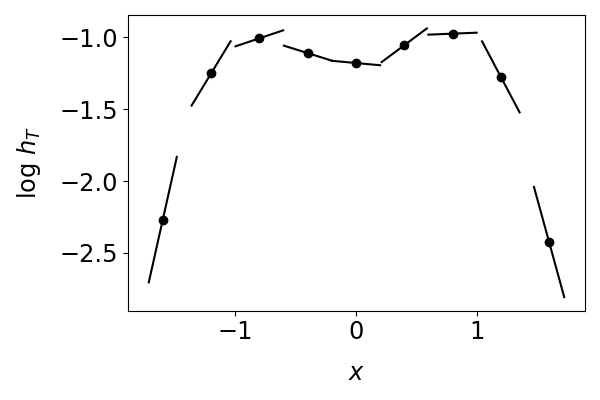} \hfill
  \includegraphics[width=0.45\textwidth]{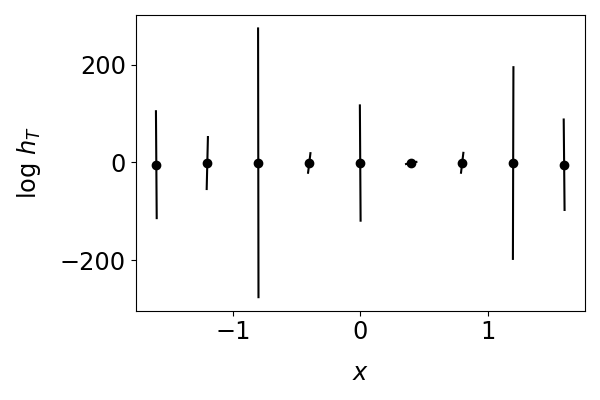}
  \caption{
  Divergence method for 1-dimensional SDE with multiplicative noise.
  Left: $T=0.1$. 
  Right: $T=3$.
  Other settings are the same as \Cref{f:1d_ker}.
  }
  \label{f:1d_div}
\end{figure}

Finally, we apply the divergence-kernel method in \Cref{t:divker_sde} on $\sigma_2$ with a large $T$.
Set $\alpha \equiv 10$, the SDE of $\nu$ becomes
\begin{equation*}\begin{split}
  d \nu
  = \left( 
  (\sigma'^2 - F' - \alpha )\nu 
  - F'' + 2 \sigma'' \sigma'
  \right) dt
  - 
  (\sigma' \nu +  \sigma'' + \alpha/\sigma ) dB
  .
\end{split}\end{equation*}
The results of the divergence-kernel method match correctly $\nabla \log h_T$ computed from the empirical distribution of $x_T$.
The results are in \Cref{f:1d_divker}.

\begin{figure}[ht] \centering
  \includegraphics[width=0.45\textwidth]{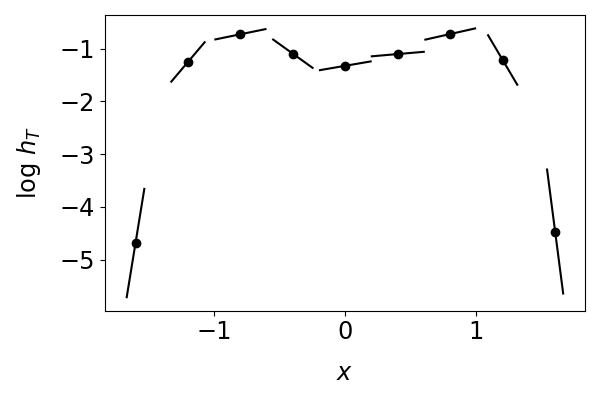}
  \caption{
  Divergence-kernel method for 1-dimensional SDE with multiplicative noise.
  $T=3$, $\alpha=10$.
  Other settings are the same as \Cref{f:1d_ker}.
  }
  \label{f:1d_divker}
\end{figure}

\subsection{40-dimensional Lorenz 96 system with multiplicative noise}
\label{s:eg_lorenz}

We use \Cref{t:divker_sde_noh0} to compute the score of the Lorenz 96 model \cite{Lorenz96} with multiplicative noise in $\R^M$. 
The dimension of the system is $M=40$.
The SDE is 
\begin{eqnarray*}
d x^i
= \left( \left(x^{i+1}-x^{i-2}\right) x^{i-1} - x^i + 8 - 0.01 (x^i)^2 \right) dt + (2+\sigma(x)) dB^i
\quad \textnormal{where}
\\
\sigma(x) = \exp{- |x|^2/2};
\quad
i=1, \ldots, M;
\quad
x_0 = [1,\ldots,1].
\end{eqnarray*}
Here $i$ labels different directions in $\R^M $, and it is assumed that $x^{-1}=x^{M-1}, x^0=x^M$ and $x^{M+1}=x^1$. 
We added noise and the $ - 0.01 (x^i)^2$ term, which prevents the noise from carrying us to infinitely far away.
The terms in \Cref{t:divker_sde_noh0} become
\begin{equation*}\begin{split}
  \nabla \sigma(x) = - \sigma x, \quad
  \nabla \sigma \nabla \sigma^T \nu = \sigma^2 \cdot x^T\nu \cdot x, \quad
  \Delta \sigma = \sigma \left( |x|^2 - M \right) , \quad
  \\
  \nabla^2 \sigma = \sigma (x x^T - I), \quad
  \nabla^2 \sigma \nabla \sigma = \sigma^2 (-x x^T x  + x), \quad
  \\
  \div F = - 40 - 0.02 \sum x^i, \quad
  \nabla \div F = - 0.02 [1, \ldots, 1].
\end{split}\end{equation*}
Note that the initial distribution is singular, so $\nabla\log h_0$ is not even defined; fortunately, \Cref{t:divker_sde_noh0} does not need it.
Set $T=0.3$ so that the expression in \Cref{t:divker_sde_noh0} is not too large.

To verify our new formula, we employ an observable and a constant $v_T$,
\[
  \Phi(x) := \frac 1M \left( x^1+\cdots x^M \right),
  \quad \textnormal{} \quad 
  v_T(x) \equiv [1,\ldots, 1].
\]
Our goal is to compute the linear response
\begin{equation*}\begin{split}
  \delta \E { \Phi(x_T - \gamma v_T)} 
  :=\delta \int \Phi(x_T - \gamma v_T) h_T(x_T) dx_T,
  \quad \textnormal{where} \quad 
  \delta(\cdot):= \pp{}{\gamma} |_{\gamma=0}.
\end{split}\end{equation*}
There are two ways to compute it; the first is 
\begin{equation*}\begin{split}
  \delta \int \Phi(x_T - \gamma v_T) h_T(x_T)  dx_T
  = 
  - \int \nabla_{v_T} \Phi(x_T) h_T(x_T) dx_T
  = -1
  .
\end{split}\end{equation*}
The second is to compute by averaging our new formula.
First, change the dummy variable $x_T$ to $y_T + \gamma v_T$, whose Jacobian matrix is $ \pp y x = I -\gamma \pp vx $, so 
\begin{equation*}\begin{split}
  \int \Phi(x_T - \gamma v_T) h_T(x_T)  dx_T
  = 
  \int \Phi(y_T) h_T(y_T + \gamma v_T) 
  \det\left(I - \gamma \pp vx \right)^{-1} dy_T.
\end{split}\end{equation*}
Using similar arguments as in the proof of \Cref{t:div},
and note that $\div v_T=0$,
\begin{equation*}\begin{split}
  \delta \E { \Phi(x_T - \gamma v_T)} 
  = 
  \E{
  \Phi(x_T) \left(\nabla_{v_T} \log h_T (x_T) 
  + \div v_T(x_T)
  \right)
  }
  = 
  \E{
  \Phi(x_T) \nu_T\cdot v_T
  } .
\end{split}\end{equation*}
Here $\nabla \log h_T$ is computed using our new formulas.
Note that here $x_T$ is integrated, so we no longer require many samples with the same $x_T$; we can have many sample paths of the SDE, then compute one $\nu_T$ on each of the paths, then average these $\nu_T$'s.

For SDEs, we use the Euler integration scheme with $\Dt=0.002$.
A typical orbit is in \Cref{f:lorenz}.
Then we use \Cref{t:divker_sde_noh0} to compute $\nu_T$ at $T=0.3$.
Then we compute the deviation 
\begin{equation*}\begin{split}
  \Phi(x_T) \nu_T \cdot v_T + 1.
\end{split}\end{equation*}
By the above analysis, the deviation should have zero average.
The histogram of the computed deviations is shown in \Cref{f:lorenz}; the average of the computed deviations is close to zero, indicating that the score computed by the divergence-kernel method is correct.

\begin{figure}[ht] \centering
  \includegraphics[width=0.45\textwidth]{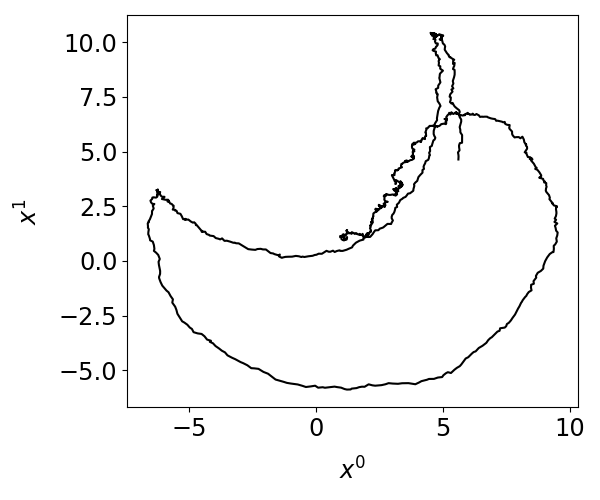} \hfill
  \includegraphics[width=0.45\textwidth]{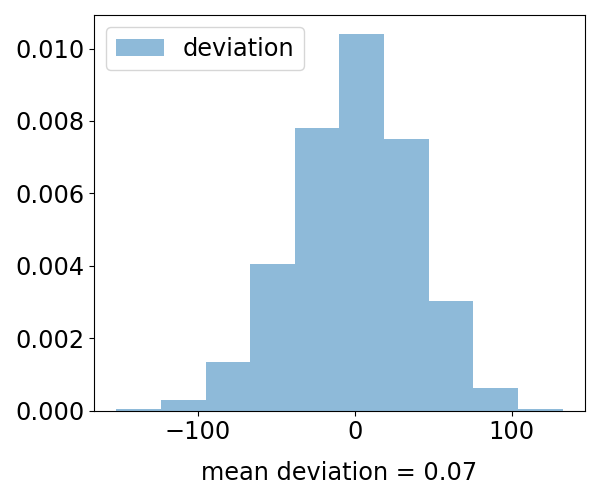}
  \caption{
  Left: plot of $x^0_t, x^1_t$ from a typical orbit of time length $3$. 
  Right: histogram of the deviations computed on $10000$ sample paths, whose theoretical expectation should be zero.
  }
  \label{f:lorenz}
\end{figure}

%\section*{Acknowledgements}

%The author thanks Xue-Mei Li and Martin Hairer for helpful discussions.

\section*{Data availability statement}
The code used in this paper is posted at \url{https://github.com/niangxiu/divker}.
There are no other associated data.

%\begin{appendix}

%\end{appendix}

\bibliographystyle{abbrv}
{\footnotesize\bibliography{library}}

\end{document}